\def\RR{\ensuremath{\mathbb{R}}}
\newcommand{\CC}{{\mathbb C}}
\newcommand{\V}{{\mathcal V}}
\newcommand{\N}{{\mathcal N}}
\newcommand{\M}{{\mathcal M}}
\newcommand{\E}{{\mathcal E}}
\newcommand{\linspan}{\textrm{span}}
\newcommand{\rank}{\textup{rank}}
\newtheorem{theorem}{Theorem}
\numberwithin{theorem}{section} 
\numberwithin{equation}{section} 
\newtheorem{definition}[theorem]{Definition}
\newtheorem{lemma}[theorem]{Lemma}
\newtheorem{example}[theorem]{Example}
\newtheorem{proposition}[theorem]{Proposition}
\newcommand{\Diag}{\mbox{\rm Diag}\,}
\newcommand{\VV}{{\mathcal{V}}}
\newcommand{\R}{{\mathbb{R}}}
\newcommand{\C}{{\mathbb{C}}}
\newcommand{\f}{\frac}
\newcommand{\ra}{\rightarrow}
\newcommand{\lt}{\left}
\newcommand{\rt}{\right}
\title[Real Critical Points of the Distance to Orthogonally Invariant Sets]{Counting Real Critical Points of the Distance\\ to Orthogonally Invariant Matrix Sets}
\author{Dmitriy Drusvyatskiy \and Hon-Leung Lee \and Rekha R. Thomas}
\address{Department of Mathematics, University of Washington, Box 354350, Seattle, WA 98195-4350}
\email{[ddrusv, hllee, rrthomas]@uw.edu}
\thanks{Research of Drusvyatskiy was partially supported by the AFOSR YIP award FA9550-15-1-0237. Lee and Thomas were partially supported by the NSF grant DMS-1418728.}
\begin{document}

\begin{abstract}
Minimizing the Euclidean distance to a set arises frequently in applications. When the set is algebraic, a measure of complexity of this optimization problem is its number of critical points. In this paper we provide a general 
framework to compute and count the real smooth critical points of a data matrix on an orthogonally invariant set of matrices. The technique relies on ``transfer principles'' that allow calculations to be done in the space of singular values of the matrices in the orthogonally invariant set.  The calculations often simplify greatly and yield transparent formulas. 
We illustrate the method on several examples, and compare our results to the recently introduced notion of {\em Euclidean distance degree} of an algebraic variety.
\end{abstract}

\maketitle

\section{Introduction} \label{sect:intro}
Finding an element of a subset $\mathcal{V}$ in $\R^n$ closest to a specified point $y$ is a common task in computational mathematics, often called the {\em Euclidean distance (ED) minimization problem}:
\begin{align} \label{prob:ED} \textup{minimize } \sum_{i=1}^n (x_i-y_i)^2 \,\,\textup{ subject to } \,\,x \in \V. \end{align}
 Our current work is motivated by the systematic study of the ``critical points'' of the problem \eqref{prob:ED} in an algebraic setting, initiated in \cite{DHOST} and continued in \cite{draisma,hwangrae,genericEDdeg}. There, the basic assumption is that 
$\mathcal{V}$ is a {\em real variety} ---  zero set of finitely many polynomials with real coefficients. 
Consider now the set $\V_\CC$ of complex points satisfying the defining equations of $\mathcal{V}$.
Then a {\em critical point} of $y \in \CC^n$ with respect to \eqref{prob:ED} is any smooth (possibly complex) point
$x$ of $\V_\CC$ such that $y-x$ lies in the normal space of $\V_\CC$ at $x$, meaning that $y-x$ lies in the span of the gradients of the defining equations at $x$. It was shown in \cite{DHOST} that for a general data point $y\in \C^n$, the number of (complex) critical points of 
\eqref{prob:ED} is a constant. This constant is called the {\em Euclidean distance degree} of $\V$, denoted by EDdegree$(\V)$, and is a measure of the algebraic complexity of expressing a minimizer of \eqref{prob:ED} as a function of $y$. 

The work in this paper is geared towards understanding the (real) critical points
of {\em orthogonally invariant} matrix sets.
Our theme is best illustrated with an example. 
	Fix positive integers $r \leq n \leq t$ and consider the  matrix set 
	$$
	\RR^{n\times t}_r := \{X\in \RR^{n\times t} \ : \  \rank(X) \leq r\}.
	$$
Finding the closest matrix of rank at most $r$ to a given matrix $Y$ arises in many applications.  The set $\RR^{n \times t}_r$ is a real variety, and the authors of 
\cite{DHOST} established that $\textup{EDdegree}(\RR^{n\times t}_r) = {n \choose r}$. They also provide a recipe for  
all the critical points of $Y$ on $\RR^{n\times t}_r$, which may be viewed as a generalization of the {\em Eckart-Young theorem}.

In the context of this paper, what is important about $\RR^{n\times t}_r$ is that it is {\em orthogonally invariant}, meaning that if $X \in \RR^{n \times t}_r$ then for all orthogonal matrices $U$ and $V$ of appropriate sizes, 
$UXV^\top$ is also in $\RR^{n \times t}_r$. Alternately, membership of $X$ in the set $\RR^{n\times t}_r$ 
is determined solely by the vector of singular values $\sigma(X) = (\sigma_1(X),\ldots,\sigma_n(X))$. 
Indeed, a matrix $X$ lies in $\RR^{n\times t}_r$ if and only if its vector of singular values $\sigma(X)$ lies in the set
$$\R^n_r:=\{x\in\R^n: \rank(x)\leq r\},$$
where $\rank(x)$ denotes the number of nonzero coordinates of $x$. Observe that the geometry of the piecewise linear set $\R^n_r$ is much simpler than that of the highly nonlinear set $\R^{n\times t}_r$. In particular, $\RR^n_r$ is also a variety and $\textup{EDdegree}(\RR^{n}_r) = {n \choose r}$. 
The equality $\textup{EDdegree}(\RR^{n}_r)=\textup{EDdegree}(\RR^{n\times t}_r)$ is not accidental; 
in this paper, we elucidate this phenomenon. 

As alluded to above, our focus in this paper is on a (naturally defined) real analog of critical points for \eqref{prob:ED}. Namely, we say that $x\in\R^n$ is a (real) {\em ED critical point} of $y\in\R^n$ relative to a set $\mathcal{V}\subset\R^n$ (not necessarily a variety) if $x$ is a smooth point of $\mathcal{V}$ and $y-x$ lies in the normal space to $\mathcal{V}$ at $x$; see Definition~\ref{def:real critical points} for details. 
Our main result shows that one can always obtain the ED critical points of an orthogonally invariant matrix set by restricting to diagonal matrices, or equivalently, to an {\em absolutely symmetric} set obtained from the singular values of the matrices.
In the last section, we  explore the connection between ED critical points and the critical points in the sense of \cite{DHOST} that happen to be real, a surprisingly subtle topic. In particular, ED critical point calculations often help to understand $\textup{EDdegree}(\mathcal{V})$, when $\mathcal{V}$ is a  variety.

Sets constrained via their singular values are numerous in applications. 
Define $\E \subseteq \RR^{3 \times 3}$ to be the set of rank deficient matrices with two equal singular values:
\begin{equation}\label{eqn:essential}
\mathcal{\E} :=\{X\in\R^{3\times 3}: \sigma_1(X)=\sigma_2(X), \,\sigma_3(X)=0\}.
\end{equation}
 A matrix $X \in \E$ is called an {\em essential matrix} in 3D computer vision and represents a pair of calibrated pinhole cameras \cite[Chapter 9]{hartley-zisserman}. The set $\E$ also happens to be a real variety cut out by the following ten cubic equations in the entries of $X$\cite[Proposition 4]{maybank}:
		\begin{align} \label{demazure cubics}
			\det(X) = 0, \,\,\, 2 XX^\top X - {\rm tr}(XX^\top) X = 0.
		\end{align}
Observe that $\E$ is orthogonally invariant as membership of a matrix $X$ in $\E$ depends only on its singular values. We will show that 
one can obtain the ED critical points of $\E$ from the simpler set $E_{3,2}$ of vectors in $\R^3$ with one coordinate zero and the other two equal in absolute value. 

The idea of studying orthogonally invariant matrix sets $ \M \subseteq \R^{n\times t}$ via their diagonal restrictions $S=\{x:\Diag x\in \M\}$ is not new, and goes back at least to von Neumann's theorem on {\em unitarily invariant matrix norms} \cite{von_Neumann}. In recent years, the general theme has become clear:
 various analytic properties of $\M$ and $S$ are in one-to-one correspondence. This philosophy is sometimes called the ``transfer principle''; see for instance, \cite{man}. For example, $\M$ is $C^p$-smooth around a matrix $X$ if and only if $S$ is $C^p$-smooth around $\sigma(X)$ \cite{spec_id,der,diff_2,diff_1}. Other properties, such as {\em convexity} \cite{cov_orig}, {\em positive reach} \cite{spec_prox}, {\em partial smoothness} \cite{spec_id}, and {\em Whitney conditions} \cite{mather} follow the same paradigm. We note in passing that the setting of eigenvalue constrained sets of nonsymmetric matrices is more complicated; see e.g. \cite{math_e-val,nonlip_eig}.  In the current work, we 
 derive a transfer theorem for the ED critical points of an orthogonally invariant matrix set $\M$ and illustrate it on several examples. The manuscripts \cite{send,eval,spec_prox} play a central role in our work. The transfer paradigm has many useful features. The first is that in many instances, the calculation over the set $S \subseteq \RR^n$ is simpler than the original one over $\M$. This can make the formulas for the number of ED critical points of $\M$ much more transparent as compared to the direct calculation in matrix space. The case of $\RR^{n\times t}_r$ is an example of this. Secondly, our method focuses on (real) ED critical points as opposed to all complex critical points in the setting of algebraic varieties. This is useful in applications and allows us to gauge the difference between the real and complex situations (under appropriate conditions). 

This paper is structured as follows. In Section~\ref{sect:ED critical points} we establish basic notation and define the key notion of ED critical points of a set in $\RR^n$ with respect to a data point $y$. Then in Section~\ref{sect:real critical points of spectral sets} we derive our main theorem 
that transfers the study of ED critical points of an orthogonally invariant set of matrices to that of its diagonal restriction.
Section~\ref{sect:applications} illustrates the technique on some concrete examples deriving formulas for the number of ED critical points in each case. Finally in Section~\ref{sect:eddegree} we establish the relationships between our work and the results in \cite{DHOST}, by restricting to algebraic varieties. In particular, we compare our formulas to those for EDdegree in several instances. We note in passing, that many of the results in the paper also hold for
complex matrices with respect to the standard Hermitian inner product.

\section{ED critical points of subsets of $\RR^n$} \label{sect:ED critical points}
Throughout this paper we consider the $n$-dimensional Euclidean space $\R^n$, with a fixed orthonormal basis.
Let $\langle\cdot,\cdot \rangle$ denote the inner product in this setting, and $\|\cdot\|$ denote the induced norm. The {\em distance} and the {\em projection} of a (data) point $y \in \RR^n$ onto a subset $S \subseteq \R^n$, are defined by 
 \begin{align*}
 {\rm dist}_S(y)&:= \inf_{x\in S} \|y-x\|, \qquad\textup{ and } \\
 {\rm proj}_S(y) &:= \{x \in S\, : \, {\rm dist}_S(y) = \|y-x\|\}.
 \end{align*}
Computing the distance of $y$ to $S$  amounts to solving the optimization problem:
\begin{align} \label{eq:distance prob}
\inf_{x\in S} \frac{1}{2}\|y-x\|^2 = \inf_{x\in S} \frac{1}{2} \sum_{i=1}^n (y_i-x_i)^2.
\end{align}
A classical first-order necessary condition for a putative 
point $x\in S$ to be optimal for \eqref{eq:distance prob} is that the gradient of the object function, namely $x-y$, makes an acute angle with every vector $v$ in the {\em tangent cone}\footnote{If $x$ is an isolated point of $S$, then $\mathcal{T}_{S}(x)$ is declared to consist only of the origin.} $$\mathcal{T}_{S}(x):=\R_+\Big\{\lim_{z_i\to x} \frac{z_i-x}{\|z_i-x\|}: z_i\in S \Big\}.$$
One can regard such points as generalized critical points of the distance minimization problem \eqref{eq:distance prob}. On the other hand, in order to compare and unify our work with that in \cite{DHOST},  we will impose an extra smoothness condition on a point $x \in S$ in order for it to be considered critical for \eqref{eq:distance prob}. To this end, throughout the manuscript, we fix $p\in \{2, 3, \ldots, \infty, \omega\}$, and say that a point $x \in S$ is {\em $C^p$-smooth} if 
there is a neighborhood $\Omega$ of $x$ such that 
$S\cap \Omega$ is an embedded $C^p$-smooth manifold 
(recall that $C^\omega$ means {\em real analytic}). In this case, the tangent cone $\mathcal{T}_{S}(x)$ is the usual tangent space in the sense of differentiable manifolds, and the criticality condition above amounts to the inclusion $y-x\in \N_{S}(x)$, where $\N_{S}(x)$ denotes the {\em normal space} to $S$ at $x$ --- the orthogonal complement of $\mathcal{T}_{S}(x)$.

From now on we abbreviate ``$C^p$-smooth"  to ``smooth". We will use $S^*$ to denote the set of 
all smooth points of $S$. Here is then our main definition, attuned to the one considered in \cite{DHOST}.

\begin{definition}[ED critical points] \label{def:real critical points}
	{\rm 
Consider a set $S \subseteq \RR^n$ and a point $y$ in $\RR^n$. A point $x \in S$ is an {\em ED critical point} of $y$ on $S$ if the following conditions hold:
\begin{enumerate}[(i)]
\item $x\in S^*$, and 
\item $y-x \in \N_S(x)$. 
\end{enumerate}
The symbol $\textup{C}_S(y)$ will denote the set of all ED critical points of $y$ on $S$, while the cardinality of $\textup{C}_S(y)$ will be denoted by $C^\#_S(y)$. 
} 
\end{definition}

By definition, all ED critical points of $y$ on $S$ are real and smooth.
The number of ED critical points $C^\#_S(y)$ varies with $y$. For example, consider the parabola $S$ shown in 
Figure~\ref{fig:parabola}, with an additional curve called its {\em ED discriminant} or {\em evolute}. 
All points $y$ above the evolute have three  ED critical points while below the evolute they have one  ED critical point.
Since $S$ is an algebraic variety, we can compute its EDdegree which is three, i.e., $S_\CC$ has 
three distinct regular complex critical points almost everywhere. We will comment more on the ED discriminant in 
the Appendix.

\begin{figure} 
\includegraphics[scale=0.3]{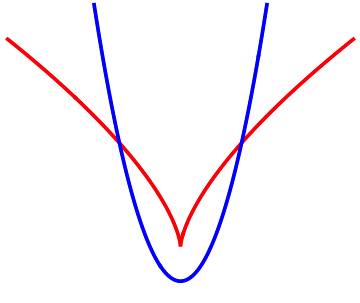}
\caption{A parabola in the plane with its evolute or ED discriminant.} \label{fig:parabola}
\end{figure}

Letting $\Theta$ be the collection of all Lebesgue null subsets of $\R^n$, the following worst-case measure of criticality arises naturally:
$$
C^\#(S) := \inf_{\Gamma\in \Theta } \sup_{y\in \Gamma^c} C^\#_S(y),
$$
where $\Gamma^c$ is the set complement of $\Gamma$ in the ambient space. 
Indeed, intuitively if one believes that any single zero measure set $\Gamma$ can be discarded, then $C^\#(S)$ measures the maximal value of $C^\#_S(y)$ that has a non-negligible chance of being encountered. For our purposes, one could think of $C^\#(S)$ as a real analog of 
the EDdegree considered in \cite{DHOST}. We discuss this further in Section~\ref{sect:eddegree}.

We note in passing that this criticality measure can be infinite in pathological situations (e.g. union of countably many co-centric circles in $\R^2$). On the other hand, for structured sets, such as those that are semi-algebraic, this number is finite. 

\begin{proposition}[ED critical points of semi-algebraic sets]  \label{prop:semialgebraic}
For any semi-algebraic set $S$ in $\R^n$, the number $C^\#(S)$ is finite.	
\end{proposition}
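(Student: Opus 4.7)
The plan is to interpret $C^\#(S)$ as the generic fiber cardinality of a natural semi-algebraic projection, and then bound it using finiteness properties of semi-algebraic sets.

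First, I would record the basic semi-algebraic setup: the smooth locus $S^*$ is itself a semi-algebraic set (being a $C^p$-smooth point of a semi-algebraic set is a first-order definable property via the Tarski--Seidenberg theorem, cf.\ Bochnak--Coste--Roy). By the stratification theorem, $S^*$ admits a finite partition
\[
S^* = M_1 \sqcup M_2 \sqcup \cdots \sqcup M_k,
\]
where each $M_i$ is a connected semi-algebraic embedded $C^p$ manifold, say of dimension $d_i$. For any $x\in M_i$, the normal space $\N_{S}(x)$ coincides with $\N_{M_i}(x)$, and the assignment $x\mapsto \N_{M_i}(x)$ is a semi-algebraic map (it is definable from the Jacobian of any local defining equations on $M_i$).

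Next, I would introduce the \emph{ED correspondence}
\[
E \;:=\; \bigcup_{i=1}^{k} E_i, \qquad E_i := \{(x,y)\in M_i\times \R^n : y-x\in \N_{M_i}(x)\}.
\]
Each $E_i$ is semi-algebraic and, as the affine normal bundle over $M_i$, has dimension $d_i + (n-d_i) = n$. Hence $E$ is semi-algebraic with $\dim E \le n$. The key observation is that the projection $\pi\colon E \to \R^n$, $(x,y)\mapsto y$, has fiber $\pi^{-1}(y) = \{(x,y): x \in \textup{C}_S(y)\}$ of cardinality exactly $C^\#_S(y)$.

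The conclusion would then follow from Hardt's semi-algebraic triviality theorem applied to $\pi$: there is a finite semi-algebraic partition $\R^n = V_0 \sqcup V_1 \sqcup \cdots \sqcup V_m$ such that $\pi$ restricted to $\pi^{-1}(V_j)$ is a semi-algebraically trivial fibration, so in particular $C^\#_S(y)$ is constant (possibly $+\infty$) on each $V_j$. If some $V_j$ had infinite generic fiber, then $\dim \pi^{-1}(V_j) \ge \dim V_j + 1$; combined with $\dim \pi^{-1}(V_j) \le \dim E \le n$, this forces $\dim V_j \le n-1$, so $V_j$ has Lebesgue measure zero. Taking $\Gamma$ to be the union of all such ``bad'' $V_j$ (a finite union of null sets, hence null), the remaining strata give only finitely many finite cardinalities, whose maximum is the desired upper bound.

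The main technical obstacle is justifying that $S^*$ and the normal-space map on each stratum are semi-algebraic cleanly enough to conclude that $E$ is semi-algebraic; once this is in place, the dimension-counting combined with Hardt's theorem is routine. An alternative, and arguably cleaner, route would be to invoke the semi-algebraic version of the generic triviality/Sard-type result directly for the projection $\pi$ restricted to the (smooth, $n$-dimensional) affine normal bundle of each stratum $M_i$, showing that the set of $y$ with infinitely many preimages in $E_i$ is contained in a proper semi-algebraic subset of $\R^n$, hence null.
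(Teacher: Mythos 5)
Your proposal is correct and follows essentially the same route as the paper: both construct the (semi-algebraic, $n$-dimensional) affine normal bundle over the strata of $S^*$, note that the projection to the data space has fibers identified with $\textup{C}_S(y)$, and conclude by a semi-algebraic finiteness argument. The only difference is cosmetic: the paper parametrizes the bundle by $(x,v)$ and projects via $\phi(x,v)=x+v$ rather than your $(x,y)$ correspondence, and it closes by invoking a uniform bound on the number of connected components of fibers of a semi-algebraic map, whereas you invoke Hardt's semi-algebraic triviality theorem together with a dimension count---both are standard tools that deliver the same conclusion.
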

\begin{proof}
Standard quantifier elimination shows that the set $S^*$ is semi-algebraic. Define the manifolds $M_i:=\{x\in S^*: \dim \mathcal{T}_{S}(x)=i\}$ for $i=1,\ldots,n$. Again, quantifier elimination shows that the normal bundle 
$$\Omega :=\bigsqcup_{i} \{(x,v): x\in M_i,\, v\in \mathcal{N}_{M_i}(x)\}$$
is a semi-algebraic subset of $\R^n\times\R^n$ having dimension $n$. Consider now the mapping $\phi\colon\Omega\to\R^n$ defined by $\phi(x,v)= x+v$. Notice that $C_S(y)$ coincides with the projection of the preimage $\phi^{-1}(y)$ onto $x$. For dimensional reasons, there is a full-measure set $D\subseteq\R^n$ so that for every $y\in D$ the preimage $\phi^{-1}(y)$ has finite cardinality. Moreover, since preimages of a semi-algebraic map have a uniformly bounded number of connected components (see e.g. 
\cite[Theorem 3.12]{minimal}), the quantity $C^{\#}_S(y)$ is uniformly bounded on $D$. The result follows.  
\end{proof}


Next we consider ED critical points on a union of finitely many sets.
Consider a finite collection of sets $\{S_i\}_{i\in \mathcal{I}}$ in $\R^n$ and define the union $U:=\bigcup_{i\in \mathcal{I}} S_i$. In general, the two sets  $C_{U}(y)$ and $\bigcup_{i\in \mathcal{I}} C_{S_i}(y)$ can be vastly different because of the way the sets $S_i$ intersect.  For instance, think of two half spaces whose union is $\RR^n$. A simple situation in which more can be said is when locally around each critical point $x\in C_U(y)$, the set $U$ coincides with $S_i$ for some $i\in \mathcal{I}$. In that situation, if $x \in C_U(y)$ then $x$ is also in $C_{S_i}(y)$.
An important situation in this paper is the case of all $S_i$ being affine subspaces. We say that a finite collection of sets $\{S_i\}_{i\in \mathcal{I}}$ in $\R^n$ is {\em minimally defined} if no $S_i$ is contained in any $S_j$ for distinct indices $i$ and $j$.

\begin{proposition}[ED critical points of affine complexes] \label{prop:linear}
Consider a finite collection of affine subspaces $\{S_i\}_{i\in \mathcal{I}}$ in $\RR^n$, that is minimally defined, and let  $U:=\bigcup_{i\in \mathcal{I}} S_i$. Then we have
\begin{equation}\label{eqn:crit eq}
U^*=\{x\in\R^n: \textrm{ there exists unique }i\in \mathcal{I} \textrm{ with } x\in S_i\}.
\end{equation}
Consequently for any $y\in \R^n$, we have 
\begin{align} \label{eq:crit subspace}
{\rm C}_{U}(y) = 
\bigsqcup_{i\in \mathcal{I}} \lt({\rm proj}_{S_i} (y) \cap U^* \rt),
\end{align}
and the equality
$$C^\#(U)= |\mathcal{I}|.$$
\end{proposition}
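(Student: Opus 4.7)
The plan is to establish the three claims sequentially, with the smoothness characterization \eqref{eqn:crit eq} as the main technical step and the rest following quickly from it.

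For \eqref{eqn:crit eq} the ``$\supseteq$'' direction is straightforward: if $x$ lies in a unique $S_i$, then $x$ has positive distance to every other (closed) $S_j$, so on a small neighborhood $\Omega$ of $x$ we have $U\cap\Omega=S_i\cap\Omega$, an embedded real analytic manifold. For the reverse, I would suppose $x\in S_i\cap S_j$ with $i\neq j$, and denote by $V_i, V_j$ the direction subspaces of $S_i, S_j$. Since the collection is minimally defined and both affine subspaces contain $x$, neither $V_i\subseteq V_j$ nor $V_j\subseteq V_i$ can hold (either containment would force $S_i\subseteq S_j$ or vice versa once $x$ is shared). Then $\mathcal{T}_U(x)$ contains $V_i\cup V_j$, which is not a linear subspace, so $U$ cannot be an embedded smooth manifold at $x$.

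For \eqref{eq:crit subspace}, \eqref{eqn:crit eq} associates to each $x\in U^*$ a unique index $i$ with $x\in S_i$ together with a local identification of $U$ with $S_i$ near $x$; this gives $\mathcal{N}_U(x)=V_i^\perp$. The condition $y-x\in\mathcal{N}_U(x)$ is then exactly the first-order optimality condition characterizing $x={\rm proj}_{S_i}(y)$, which is a single point by uniqueness of projection onto an affine subspace. Disjointness of the union is automatic, because any element of $U^*$ belongs to only one $S_i$.

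For $C^\#(U)=|\mathcal{I}|$, the upper bound $C^\#_U(y)\leq|\mathcal{I}|$ is immediate from \eqref{eq:crit subspace} since each projection contributes at most a point. For the matching lower bound I would exhibit a Lebesgue-null ``bad set'' $\Gamma$ outside of which every ${\rm proj}_{S_i}(y)$ lies in $U^*$. For $i\neq j$, the data $y$ with ${\rm proj}_{S_i}(y)\in S_j$ form the preimage ${\rm proj}_{S_i}^{-1}(S_i\cap S_j)$; minimal definedness makes $S_i\cap S_j$ either empty or a proper affine subspace of $S_i$, so its preimage is either empty or a proper affine subspace of $\R^n$ and thus Lebesgue null. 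The finite union of such preimages over all ordered pairs yields $\Gamma$, and on $\Gamma^c$ one has $C^\#_U(y)=|\mathcal{I}|$.

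The main obstacle is the non-smoothness direction of \eqref{eqn:crit eq}: it is the only step where minimal definedness is genuinely needed, and the tangent cone argument must be phrased carefully to exclude the possibility that $U\cap\Omega$ is a smooth manifold of larger dimension than either $S_i$ or $S_j$ individually.
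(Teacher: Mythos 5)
Your proof of the $\subseteq$ direction of \eqref{eqn:crit eq} has a genuine gap, which you yourself flag as ``the main obstacle'' in the closing paragraph but do not resolve in the body. You argue: ``$\mathcal{T}_U(x)$ contains $V_i\cup V_j$, which is not a linear subspace, so $U$ cannot be an embedded smooth manifold at $x$.'' This is a non sequitur — a set that contains a non-subspace can of course still be a linear subspace (for instance, $\R^2$ contains the union of its two coordinate axes). To rule out the possibility that $\mathcal{T}_U(x)$ is a subspace strictly larger than every $V_k$, you need the stronger, and true, statement that the tangent cone is \emph{exactly} the finite union
\[
\mathcal{T}_U(x)=\bigcup_{k\in\mathcal{I}\,:\,x\in S_k}(S_k-x)=\bigcup_{k\in\mathcal{I}\,:\,x\in S_k}V_k,
\]
which holds because near $x$ the set $U$ agrees with $\bigcup_{k:\,x\in S_k}S_k$. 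Once you have this equality, the argument closes: if $x\in U^*$ then $\mathcal{T}_U(x)$ is a linear subspace, and a real vector space cannot be written as a finite union of proper subspaces, so some $V_k$ must equal $\mathcal{T}_U(x)$ and hence contain every $V_j$ with $x\in S_j$; combined with $x\in S_k\cap S_j$ this gives $S_j\subseteq S_k$, and minimality forces $j=k$ for all such $j$. This is exactly how the paper proceeds. The remainder of your argument — the local identification giving \eqref{eq:crit subspace}, and the construction of a Lebesgue-null exceptional set $\Gamma$ out of the preimages $\mathrm{proj}_{S_i}^{-1}(S_i\cap S_j)$ for the cardinality count — matches the paper's route (the paper calls the exceptional set $Z$) and is correct.
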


\begin{proof}
The inclusion $\supseteq$ in  \eqref{eqn:crit eq} follows since the sets $S_i$ are affine. To see the reverse inclusion, observe that the tangent cone to $U$ at any point $x$ coincides with the union 
$\bigcup_{i\in \mathcal{I}:\, x\in S_i} (S_i-x)$. For $x \in U^*$, the cone $\mathcal{T}_U(x)$ is itself a linear subspace, and hence, 
by the minimality of the collection $\{S_i\}_{i\in \mathcal{I}}$,  there exists a unique index $i\in \mathcal{I}$ satisfying $x\in S_i$. This establishes \eqref{eqn:crit eq}. Equation \eqref{eq:crit subspace} then follows immediately.

To see the last claim, 
consider the set 
$$Z :=\{y\in\R^n: \exists i \in\mathcal{I} \textrm{ with }{\rm proj}_{S_i} (y)\cap U^*=\emptyset\}.$$
We will show that $Z$ is a finite union of proper affine subspaces of $\R^n$. To see this, consider 
a point $y\in Z$ along with an index $i\in\mathcal{I}$ satisfying 
${\rm proj}_{S_i} (y)\cap U^*=\emptyset$. From \eqref{eqn:crit eq}, we conclude that there exists $j\in \mathcal{I}$, distinct from $i$, satisfying ${\rm proj}_{S_i} (y)\cap S_{j}\neq \emptyset$. Thus $y$ lies in the set $ S_i^{\perp}+ (S_i\cap S_{j})$. Since the collection $\{S_i\}$ is minimally defined, the intersection $S_i\cap S_{j}$ has dimension strictly smaller than that of $S_i$. Consequently the affine space  $ S_i^{\perp}+(S_i\cap S_{j})$ has dimension strictly smaller than $n$. Taking the union over all pairs of distinct indices $i,j\in\mathcal{I}$, we deduce that $Z$ is a finite union of proper affine subspaces of $\R^n$. Therefore, by \eqref{eq:crit subspace}, $C^\#_U(y)=|\mathcal{I}|$ for all $y \not \in Z$ which proves that $C^\#(U)= |\mathcal{I}|$.
\end{proof}

The following two elementary examples illustrate Proposition~\ref{prop:linear}; these are essentially the piecewise linear examples 
alluded to in the introduction. In the next section, we will use them
to obtain the ED critical points of the (nonlinear) matrix sets $\R^{n\times t}_r$ and $\E$. In what follows we set $[n]  := \{1,\ldots,n\}$.

\begin{example}[Union of $r$-dimensional coordinate subspaces] \label{ex:rank}
	{\rm 
		For $x\in\RR^n$, define the  rank of $x$, denoted by ${\rm rank}(x)$, to be the number of nonzero coordinates of $x$.
		Fix an integer $r \in [n]$ and recall the set 
		\begin{equation} \label{eq:def Rnr}
			\RR^n_r = \{ x \in \RR^n \,:\,    \rank(x)\leq r \}.
		\end{equation}
		Define $\mathcal{I}$ to be the collection of distinct cardinality $r$ subsets of $[n]$. Then
		 $|\mathcal{I}|={n \choose r}$, and 
		$$\RR^n_r=\bigcup_{S\in\mathcal{I}} \linspan\, \{e_i\}_{i\in S},$$
		where $e_i$ denotes the $i$'th coordinate vector in $\RR^n$.
	 This representation of $\RR^n_r$ is minimal and therefore,  Proposition \ref{prop:linear} implies the equality
			 $C^\#(\RR^n_r) = |\mathcal{I}| = {n \choose r}.$}
\end{example}

\begin{example}[$k$ nonzero entries equal up to sign] \label{ex:Enk}
	{\rm 
		Fix an integer $k\in [n]$, and
		let $E_{n,k}$ be the set of points in $\RR^n$ with the property that $k$ of their coordinates are equal in absolute value and the other $n-k$ coordinates are zero. Note that $E_{n,k}$ can be written as the union of $2^{k-1} {n \choose k}$ linear subspaces (minimally defined). Proposition \ref{prop:linear} then implies that 
		$C^\#(E_{n,k}) = 2^{k-1} {n \choose k}.$
                     As a special case, the set $E_{3,2}$, which is a union of six lines in $\RR^3$ (see Figure  \ref{fig:e32}), 
                     satisfies $C^\#(E_{3,2}) = 6$.	}
\end{example}

\begin{figure}
\includegraphics[scale = 0.5]{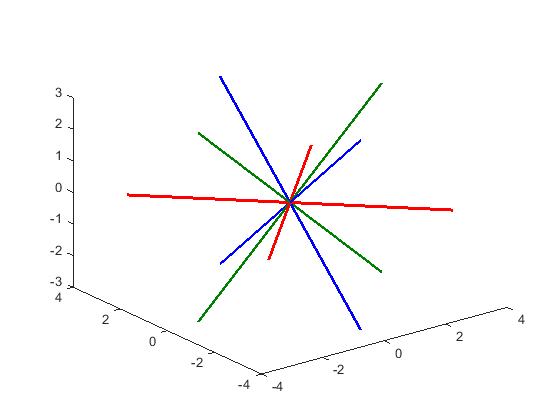}
\caption{the set $E_{3,2}$} \label{fig:e32}
\end{figure}

\section{ED critical points of orthogonally invariant matrix sets}  \label{sect:real critical points of spectral sets}

In this section we describe our main result which yields an elegant technique for counting the ED critical points of orthogonally invariant matrix sets, based on the tools established in \cite{send,eval,spec_prox}. Setting the notation, let $\R^{n\times t}$ denote the set of real $n\times t$ matrices, where we assume
without loss of generality that $n \leq t$.
The singular value map $\sigma\colon\R^{n\times t}\to\R^n$ assigns to each matrix $X \in \RR^{n \times t}$ the vector of its singular values $\sigma(X) := (\sigma_1(X), \ldots, \sigma_n(X))$ arranged in non-increasing order. The corresponding inverse map is defined as 
$$\sigma^{-1}(S):=\{X\in \R^{n\times t}: \sigma(X)\in S\} \qquad\textrm{ for any subset } S \textrm{ of } \R^n.$$

We will be interested in subsets of $\RR^{n \times t}$ that are invariant under multiplication on the left and right by orthogonal matrices. In what follows, we let $\mathcal{O}^s$ denote the group of real $s\times s$ orthogonal matrices.

\begin{definition}[Orthogonal invariance]{\hfill \\ }
	{\rm
		A set $\M \subseteq \RR^{n \times t}$ is {\em orthogonally invariant} if 
		$$\M= U \mathcal{M} V^\top  \qquad\textrm{ for all }  U\in \mathcal{O}^n \textrm{ and } V\in \mathcal{O}^t.$$}
\end{definition}

\vspace{0.15cm}

An example of an orthogonally invariant matrix set is the essential variety 
\begin{equation*} 
\E=\{X\in \R^{3\times 3} \ : \  \sigma_1(X)=\sigma_2(X),\, \sigma_3(X)=0\}
\end{equation*}
considered in the introduction.
At first sight, $\E$ is a complicated set; it is a highly nonlinear real variety cut out by the cubic polynomials in  
\eqref{demazure cubics}. 
Since orthogonally invariant matrix sets are precisely those matrix sets for which membership is determined solely by the singular values of its elements, the 
restriction of such sets to the subspace of diagonal matrices plays an important role. Our strategy will be to exploit this observation to calculate the ED critical points of orthogonally invariant matrix sets.

A mapping $\pi: [n] \ra \{\pm 1, \ldots, \pm n\}$ is a {\em signed permutation} 
if  the assignment $i\mapsto|\pi(i)|$  is a permutation on  $[n]$ in the usual sense.
We let $\Pi_n^{\pm}$ denote the set of signed permutations on $[n]$. 
Note that any signed permutation $\pi\in \Pi_n^{\pm}$ induces a linear map $\RR^n \ra \RR^n$ which we also denote by $\pi$.

\begin{definition}[Absolute symmetry]
{\rm
A set $S \subseteq \RR^n$ is  said to be {\em absolutely symmetric} if $$S = \pi S\qquad \textrm{ for all  } \pi\in\Pi_n^{\pm}.$$ 
For any set $S \subseteq \RR^n$, we call $\Pi_n^{\pm}  S:=\{ \pi x \,:\, \pi \in \Pi_n^\pm \textup{ and } x \in S \} $ the {\em absolute symmetrization} of $S$.}
\end{definition}

For a vector $x \in \RR^n$, let  $\Diag x \in \RR^{n \times t}$ denote the matrix with $x$ in its principal diagonal 
and zeros elsewhere. 
If $\M \subseteq \RR^{n \times t}$ is orthogonally invariant, then the set $\{x\in\R^n: \Diag x\in \M\}$ is absolutely symmetric, 
and we have the following basic observation (see e.g.  \cite[Proposition 5.1]{send}).

\begin{theorem} [Diagonal Correspondence] \label{thm:sets}
	A set $\M \subseteq \RR^{n \times t}$ is orthogonally invariant if and only if there exists an 
	absolutely symmetric set $S \subseteq \RR^n$ such that 
	$\M = \sigma^{-1}(S)$.
\end{theorem}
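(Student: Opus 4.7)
The plan is to prove both implications using the singular value decomposition (SVD) together with the elementary fact that orthogonal matrices preserve singular values, i.e., $\sigma(UXV^\top) = \sigma(X)$ for all $U\in \mathcal{O}^n$ and $V\in \mathcal{O}^t$.

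The reverse implication is almost immediate: if $\M = \sigma^{-1}(S)$ for some $S \subseteq \RR^n$, then for any $X \in \M$ and any orthogonal $U$, $V$ we have $\sigma(UXV^\top) = \sigma(X) \in S$, so $UXV^\top \in \M$. Notably, absolute symmetry of $S$ plays no role in this direction.

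For the forward direction, assume $\M$ is orthogonally invariant and define $S := \{x \in \RR^n : \Diag x \in \M\}$. The equality $\M = \sigma^{-1}(S)$ follows from SVD: every $X \in \RR^{n \times t}$ factors as $X = U \Diag(\sigma(X)) V^\top$ with $U \in \mathcal{O}^n$ and $V \in \mathcal{O}^t$, so by orthogonal invariance of $\M$ we have $X \in \M$ iff $\Diag(\sigma(X)) \in \M$ iff $\sigma(X) \in S$. To verify that $S$ is absolutely symmetric, observe that any signed permutation $\pi \in \Pi_n^\pm$ decomposes as a sequence of sign flips on individual coordinates followed by a pure permutation $\tau$ of $[n]$. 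A sign flip on coordinate $i$ is implemented by left-multiplying $\Diag(x)$ with the diagonal orthogonal matrix in $\mathcal{O}^n$ that has $-1$ in position $(i,i)$ and $+1$ elsewhere. The permutation $\tau$ is implemented as $\Diag(\tau x) = P_\tau \Diag(x) Q_\tau^\top$, where $P_\tau \in \mathcal{O}^n$ is the corresponding $n \times n$ permutation matrix and $Q_\tau \in \mathcal{O}^t$ is its block-diagonal extension by the identity on the trailing $t-n$ coordinates. Composing yields orthogonal $U$, $V$ with $\Diag(\pi x) = U \Diag(x) V^\top$, and orthogonal invariance then forces $\pi x \in S$ whenever $x \in S$.

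The main potential obstacle is the bookkeeping for the nonsquare case $n < t$: one must verify that a signed permutation of the $n$ diagonal entries of $\Diag(x)$ can be realized by left and right orthogonal actions that respect the $n \times t$ shape. This is handled cleanly by padding the right-hand orthogonal factor with an identity block on the extra $t-n$ columns. Beyond this, the argument reduces entirely to the invariance of singular values and the existence of the SVD.
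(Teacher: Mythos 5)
Your proof is correct, and it follows the standard SVD-based argument for this kind of statement. Note that the paper itself does not prove Theorem~\ref{thm:sets}; it cites \cite[Proposition~5.1]{send}, so there is no in-paper proof to compare against. Both directions of your argument are sound: the reverse direction is the one-line invariance of singular values under orthogonal conjugation, and the forward direction correctly defines $S$ as the diagonal restriction of $\M$, uses SVD plus orthogonal invariance to show $\M = \sigma^{-1}(S)$, and then verifies absolute symmetry of $S$ by realizing each signed permutation of the diagonal as a pair of orthogonal actions. The bookkeeping you flag for the nonsquare case $n < t$ is the right place to be careful, and your fix (padding the right factor with an identity block on the trailing $t-n$ coordinates) is exactly what is needed so that the $n \times t$ shape and the positions of the zero columns are preserved.
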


Thus the assignment $\sigma^{-1}$ from the family of absolutely symmetric sets in $\RR^n$ to the
family of orthogonally invariant sets in $\RR^{n\times t}$ is a bijection. 
As discussed in the introduction, it so happens that various analytic properties of absolutely symmetric sets $S$ and orthogonally invariant sets $\sigma^{-1}(S)$ are in one-to-one correspondence; see e.g. \cite{man,von_Neumann,der,diff_2,diff_1,cov_orig,spec_prox,spec_id,mather}. This may seem somewhat surprising since $\sigma$ is a highly nonsmooth mapping; it is the absolute symmetry of the underlying set $S$ that makes up for the fact. As an illustration, we show in Theorem~\ref{thm:variety} how the transfer principle for algebraicity can be used to see 
that the set of essential matrices $\E$  is a variety, even without explicitly knowing its defining equations  \eqref{demazure cubics}. The proof is entirely analogous to the symmetric case in \cite[Proposition 1.1]{man}; we provide an argument for completeness.

Recall that a set $\V \subseteq \RR^n$ is called a {\em real variety} if there exist polynomials $f_1, \ldots, f_s \in \RR[x_1,\ldots,x_n]$ such that $\V = \{ x \in \RR^n \,:\, f_1(x) = \ldots = f_s(x) = 0 \}$. Note that if $x \in \V$, then $\sum_{i=1}^{s} f_i^2(x) = 0$, and conversely if a point $x \in \RR^n$ satisfies $\sum_{i=1}^{s} f_i^2(x) = 0$, then $f_i(x) = 0$ for all $i=1,\ldots,s$, and hence, $x \in \V$. Therefore, $\V$ can be described as the set of real zeros of a single sum of squares polynomial with real coefficients, which we call a defining polynomial of $\V$.

\begin{theorem}[Transfer of algebraicity] \label{thm:variety}
	Suppose $S \subseteq \RR^n$ is an absolutely symmetric set. Then $\sigma^{-1}(S) \subseteq \RR^{n \times t}$ is a real variety if and only if $S$ is a real variety.
\end{theorem}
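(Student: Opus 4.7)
The easy direction is $(\Leftarrow$ for the variety claim, i.e., $\sigma^{-1}(S)$ variety implies $S$ variety$)$. First I would observe that the diagonal embedding $D\colon \R^n \to \R^{n\times t}$, $x \mapsto \Diag x$, is a (linear, hence) polynomial map, so pullbacks of varieties along $D$ are varieties. Since $\sigma(\Diag x)$ is the vector of $|x_i|$'s arranged in non-increasing order and $S$ is absolutely symmetric, we have $x\in S$ if and only if $\Diag x \in \sigma^{-1}(S)$. Thus $S = D^{-1}(\sigma^{-1}(S))$, and the conclusion follows.

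For the other direction, suppose $S$ is the zero set of a polynomial $f \in \R[x_1,\ldots,x_n]$ (using the observation from the paragraph before the theorem that a single sum-of-squares polynomial suffices). My plan is to produce, from $f$, a polynomial $F$ in the entries of $X \in \R^{n\times t}$ whose zero set is exactly $\sigma^{-1}(S)$. The first step is to replace $f$ by its signed-permutation symmetrization
\[
\tilde f(x) := \sum_{\pi \in \Pi_n^{\pm}} f(\pi x)^2.
\]
Since $S$ is absolutely symmetric, $f(\pi x)$ vanishes on $S$ for every $\pi$, and conversely $\tilde f(x) = 0$ forces $f(x)=0$ (the term corresponding to the identity), so $\tilde f$ is a nonnegative polynomial with zero set exactly $S$ and is invariant under the action of $\Pi_n^{\pm}$.

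The second step invokes classical invariant theory: a polynomial on $\R^n$ is invariant under signed permutations iff it is a symmetric polynomial in $x_1^2,\ldots,x_n^2$, hence a polynomial in the power sums $p_k(x) = \sum_{i=1}^n x_i^{2k}$, $k = 1,\ldots,n$. So there is a polynomial $h \in \R[t_1,\ldots,t_n]$ with $\tilde f(x) = h\bigl(p_1(x),\ldots,p_n(x)\bigr)$. The third step exploits the identity
\[
p_k(\sigma(X)) \;=\; \sum_{i=1}^n \sigma_i(X)^{2k} \;=\; \mathrm{tr}\bigl((XX^\top)^k\bigr),
\]
since the $\sigma_i(X)^2$ are the eigenvalues of the $n\times n$ matrix $XX^\top$. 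Each $\mathrm{tr}((XX^\top)^k)$ is manifestly a polynomial in the entries of $X$. Consequently
\[
F(X) \;:=\; h\bigl(\mathrm{tr}(XX^\top),\ldots,\mathrm{tr}((XX^\top)^n)\bigr)
\]
is a polynomial in the matrix entries that satisfies $F(X)=\tilde f(\sigma(X))$, and therefore $F(X)=0$ iff $\sigma(X)\in S$ iff $X\in \sigma^{-1}(S)$. This proves $\sigma^{-1}(S)$ is a real variety.

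The only real content is the invariant-theoretic Step 2 (signed-permutation invariants are polynomials in $x_1^2,\ldots,x_n^2$) combined with the elementary fact that power sums of squared singular values are polynomial in the entries of $X$; I expect this to be the crux. Everything else is bookkeeping: reducing to a single defining polynomial, symmetrizing to preserve the zero set, and observing that the diagonal embedding is polynomial for the reverse direction.
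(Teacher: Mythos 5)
Your proof is correct and follows essentially the same route as the paper: symmetrize $f$ over $\Pi_n^{\pm}$, observe the result is a symmetric polynomial in $x_1^2,\ldots,x_n^2$, and then push through $\sigma$ using a polynomial generating set of symmetric invariants. The only cosmetic difference is that you express the symmetric invariant via power sums $p_k$ and the traces $\mathrm{tr}((XX^\top)^k)$, whereas the paper uses elementary symmetric polynomials and the coefficients of the characteristic polynomial of $X^\top X$; these are interchangeable in characteristic zero and both yield visibly polynomial expressions in the entries of $X$.
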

\begin{proof}
	If $\sigma^{-1}(S)$ is a real variety, then 
	 $S=\{x\in\R^n: \Diag x\in \sigma^{-1}(S)\}$ is the real variety cut out by the equations defining 
	 $\sigma^{-1}(S)$ when $x_{ij}$ are set to zero for all $i \neq j$.
	Suppose conversely that $S$ is a real variety with defining polynomial $f \in \RR[x_1,\ldots,x_n]$.
	Consider the polynomial $\hat{f}(x) :=\sum_{\pi \in \Pi_n^{\pm}} f^2(\pi x)$. 
	Note that $\hat{f}(x)$ is also a defining polynomial of $S$, and hence, 
	$\sigma^{-1}(S)$ is the zero level set of $\hat{f} \circ \sigma$. To finish the proof, we just need to show that 
	$\hat{f}\circ\sigma(X)$ is a polynomial in the entries of $X$.
	To this end, since $\hat{f}$ is invariant under sign changes, it is easy to see that $\hat{f}$ is a symmetric polynomial in the squares $x^2_1,\ldots, x^2_n$, that is we may write $\hat{f}(x)=g(x^2_1,\ldots, x^2_n)$ for some symmetric polynomial $g$. By the fundamental theorem of symmetric polynomials (see e.g. \cite{symm}) we may write $g$ as a polynomial of elementary symmetric polynomials $\epsilon_1,\ldots, \epsilon_n$. On the other hand, the expressions  $\epsilon_i(\sigma^2_1(X),\ldots, \sigma^2_n(X))$ coincide with the coefficients of the characteristic polynomial of $X^\top X$ and are hence, polynomial expressions in the entries of $X$.  
\end{proof}

Another useful illustration of transfer principles concerns the distance to orthogonally invariant matrix sets. 
Recall that for a set $S \subseteq \RR^n$, the {\em distance} and {\em projection} of a point $y \in \RR^n$ 
to (respectively, onto) $S$ are defined by
${\rm dist}_S(y):= \inf_{x\in S} \|y-x\|,$ and  
${\rm proj}_S(y) := \{x \in S \ : \  {\rm dist}_S(y) = \|y-x\|\}$. (The distance and the projection in the matrix space $\RR^{n\times t}$ are defined analogously with respect to the {\em Frobenius norm} $\|Y\|:=\sqrt{\sum_{i,j} Y^2_{ij}}$.)
Then for an absolutely symmetric set 
$S \subseteq  \R^n$, the following holds \cite[Proposition 8]{spec_prox}:
\begin{equation}\label{eqn:dist_est}
{\rm dist}_{\sigma^{-1}(S)}(Y) = {\rm dist}_S(\sigma(Y)).
\end{equation}
This in turn implies the following result, which was essentially proved in \cite[Proposition 8]{spec_prox}, though not formally recorded. We provide a proof sketch for completeness.

\begin{proposition}[Projections onto orthogonally invariant matrix sets] \label{prop:proj transfer}
If $S \subseteq \RR^n$ is an absolutely symmetric set, then for any matrix $Y\in\RR^{n\times t}$ , the projection ${\rm proj}_{\sigma^{-1}(S)}(Y)$ is precisely the 
set 
\begin{align} \label{eq:spectral proj}
\left\{ U (\Diag  x) V^\top \,:\, U\in \mathcal{O}^n, V\in\mathcal{O}^t \textup{ where }
\begin{array}{l} 
Y = U (\textup{Diag } \sigma(Y))V^\top,\\
x \in {\rm proj}_S(\sigma(Y)) 
\end{array}
\right\}.
\end{align}
\end{proposition}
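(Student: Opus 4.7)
The plan is to prove both inclusions in the claimed equality (\ref{eq:spectral proj}) separately, using two tools: the distance identity ${\rm dist}_{\sigma^{-1}(S)}(Y) = {\rm dist}_S(\sigma(Y))$ recalled in (\ref{eqn:dist_est}), together with von Neumann's trace inequality $\langle X,Y\rangle \leq \langle\sigma(X),\sigma(Y)\rangle$ and its classical equality case.

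For the ``easy'' inclusion $\supseteq$, I would fix an SVD $Y = U(\Diag\sigma(Y))V^\top$ and any $x \in {\rm proj}_S(\sigma(Y))$. The singular values of $\Diag x$ are obtained from $x$ by applying a signed permutation (reordering the absolute values), so absolute symmetry of $S$ forces $\sigma(\Diag x)\in S$, hence $\Diag x\in\sigma^{-1}(S)$; orthogonal invariance of $\sigma^{-1}(S)$ then yields $U(\Diag x)V^\top\in\sigma^{-1}(S)$. Orthogonal invariance of the Frobenius norm gives $\|Y-U(\Diag x)V^\top\| = \|\Diag\sigma(Y)-\Diag x\| = \|\sigma(Y)-x\| = {\rm dist}_S(\sigma(Y))$, which equals ${\rm dist}_{\sigma^{-1}(S)}(Y)$ by the distance identity. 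So $U(\Diag x)V^\top$ attains the minimum distance to $\sigma^{-1}(S)$.

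For the converse inclusion $\subseteq$, suppose $X\in{\rm proj}_{\sigma^{-1}(S)}(Y)$. The strategy is to force two a priori separate inequalities to collapse to equalities. Expanding $\|Y-X\|^2 = \|\sigma(Y)\|^2 + \|\sigma(X)\|^2 - 2\langle X,Y\rangle$ and applying von Neumann's inequality gives $\|Y-X\|^2 \geq \|\sigma(Y)-\sigma(X)\|^2$. On the other hand, since $\sigma(X)\in S$, the optimality of $X$ and the distance identity give $\|Y-X\| = {\rm dist}_S(\sigma(Y)) \leq \|\sigma(Y)-\sigma(X)\|$. Chaining these forces equality throughout, so in particular $\langle X,Y\rangle = \langle\sigma(X),\sigma(Y)\rangle$. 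The equality case of von Neumann's trace inequality then furnishes a simultaneous SVD: there exist $U\in\mathcal{O}^n$ and $V\in\mathcal{O}^t$ with $Y = U(\Diag\sigma(Y))V^\top$ and $X = U(\Diag\sigma(X))V^\top$. Setting $x := \sigma(X)$, the remaining equality $\|\sigma(Y)-x\| = {\rm dist}_S(\sigma(Y))$ shows $x\in{\rm proj}_S(\sigma(Y))$, and $X$ has the advertised form.

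The only nontrivial ingredient is invoking the equality case of von Neumann's trace inequality to extract a simultaneous SVD of $X$ and $Y$; this is the structural input that aligns the abstract distance identity with the concrete description in (\ref{eq:spectral proj}). Everything else reduces to the elementary Frobenius-norm computation on diagonal matrices and the observation that absolute symmetry of $S$ is precisely what allows \emph{every} $x \in {\rm proj}_S(\sigma(Y))$, rather than only those with non-negative non-increasing entries, to parametrize a legitimate projection of $Y$.
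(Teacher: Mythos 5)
Your proposal is correct and follows essentially the same route as the paper: both directions rest on the distance identity \eqref{eqn:dist_est} together with von Neumann's trace inequality, with the equality case of the latter used to extract a simultaneous ordered SVD of $X$ and $Y$ in the converse inclusion. The only cosmetic difference is that you expand $\|Y-X\|^2$ explicitly to expose where the trace inequality enters and you spell out why $\Diag x \in \sigma^{-1}(S)$, both of which the paper leaves implicit.
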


\begin{proof}
Consider first matrices $U\in \mathcal{O}^n, V\in\mathcal{O}^t$ with $Y = U (\textup{Diag } \sigma(Y))V^\top$ and a vector $x\in {\rm proj}_S(\sigma(Y))$. Define $X:=U(\Diag x)V^\top$ and observe the equalities:
\begin{align*}
\|X-Y\|=\|x-\sigma(Y)\|
={\rm dist}_S(\sigma(Y))={\rm dist}_{\sigma^{-1}(S)}(Y),
\end{align*}
where the last equality follows from \eqref{eqn:dist_est}.
Hence the inclusion $X\in {\rm proj}_{\sigma^{-1}(S)}(Y)$ is valid, as claimed.	
Conversely, for any matrix $X\in {\rm proj}_{\sigma^{-1}(S)}(Y)$ observe
$${\rm dist}_{\sigma^{-1}(S)}(Y)= \|X-Y\|\geq  \|\sigma(X)-\sigma(Y)\|\geq {\rm dist}_S(\sigma(Y)),$$
where the first inequality follows from the Von Neumann's trace inequality ~\cite[p. 182]{HJ}; see also \cite[Theorem 4.6]{send}. 
Equation  \eqref{eqn:dist_est} implies equality throughout. 
In particular we get $\sigma(X)\in {\rm proj}_S(\sigma(Y))$. 
Moreover, 
applying the equality characterization in the trace inequality \cite[Theorem 4.6]{send}, we conclude that $X$ and $Y$ admit a simultaneous ordered singular value decomposition, that is, 
there exist matrices $U\in \mathcal{O}^n, V\in\mathcal{O}^t$ with $Y = U (\textup{Diag } \sigma(Y))V^\top$ and $X = U (\textup{Diag } \sigma(X))V^\top$. 
This completes the proof. 
\end{proof}

A consequence of Proposition~\ref{prop:proj transfer}  is a convenient representation of the normal space at a point in an orthogonally invariant manifold. First of, the set $\sigma^{-1}(S)$ is smooth around $X$ if and only if $S$ is smooth around $\sigma(X)$ (see e.g. \cite[Theorem 2.4]{spec_id}). Next recall that at any $C^2$-smooth point $x$ of a set $M$, the following equivalence holds:
$$z\in \mathcal{N}_{M}(x) \quad\Longleftrightarrow\quad \{x\}= {\rm proj}_{M}(x+\lambda z) \quad \textrm{ for some } \quad \lambda>0.$$
Proposition~\ref{prop:proj transfer} and a short computation implies 
that at any smooth point $X$ of $\sigma^{-1}(S)$ the following formula holds:
$$\N_{\sigma^{-1}(S)}(X) = \left\{ U (\Diag  z) V^\top \,:\, U\in \mathcal{O}^n, V\in\mathcal{O}^t \textup{ with }
\begin{array}{l} 
X = U (\Diag \sigma(X))V^\top,\\
z \in \N_S(\sigma(X)) 
\end{array}
\right\}.$$
A more general expression without smoothness assumptions can be found in \cite[Theorem 7.1]{send}. 
Using the above facts, we now derive a transfer principle for ED critical points which is the main result of this section. The proof uses the following lemma.

\begin{lemma}[Transfer of Lebesgue null sets]\label{lem:meas}
	Consider an absolutely symmetric set $S\subseteq\R^n$. Then $S$ is Lebesgue null if and only if $\sigma^{-1}(S)$ is Lebesgue null.	
\end{lemma}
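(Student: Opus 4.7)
The plan is to reduce the question to a transparent open set where the singular value decomposition is essentially unique, and then to transfer null sets across an explicit parameterization. Set
$$V := \{x\in\R^n : x_1 > x_2 > \cdots > x_n > 0\}, \qquad W := \sigma^{-1}(V),$$
so both $\R^n\setminus V$ and $\R^{n\times t}\setminus W$ are semi-algebraic sets of strictly smaller dimension than the ambient space, and hence Lebesgue null. Consequently $\sigma^{-1}(S)$ is null if and only if $\sigma^{-1}(S)\cap W$ is null. For the matching reduction on the $\R^n$ side, I would use absolute symmetry of $S$: for every signed permutation $\pi\in\Pi_n^\pm$ we have $S\cap(\pi V)=\pi(S\cap V)$, so if $S\cap V$ is null, each $S\cap\pi V$ is null; and since $\R^n\setminus\bigcup_{\pi}\pi V$ is a finite union of hyperplanes (where some $x_i=\pm x_j$ or $x_i=0$), we conclude that $S$ is null iff $S\cap V$ is null.

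On $W$ I would set up the SVD parameterization. Specifically, consider the smooth map $\Phi : \mathcal{O}^n\times V\times \mathrm{St}(n,t)\to W$ defined by $\Phi(U,x,\tilde V)=U\,\mathrm{diag}(x)\,\tilde V^\top$, where $\mathrm{St}(n,t):=\{\tilde V\in\R^{t\times n} : \tilde V^\top\tilde V=I_n\}$ is the Stiefel manifold and $\mathrm{diag}(x)$ is the $n\times n$ diagonal matrix with entries $x_i$. A direct count shows the source has dimension $\binom{n}{2}+n+(nt-\binom{n+1}{2})=nt$, matching the target. A standard fact about the SVD is that over $W$ the only ambiguity in this representation is simultaneously flipping the signs of matching columns of $U$ and $\tilde V$, so $\Phi$ is a $2^n$-fold smooth covering, and in particular a local diffeomorphism. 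Local diffeomorphisms of finite multiplicity preserve Lebesgue null sets in both directions.

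The key observation, using absolute symmetry of $S$, is that for $(U,x,\tilde V)$ in the source, $\sigma(\Phi(U,x,\tilde V))=x$ (the coordinates of $x\in V$ are already nonnegative and in decreasing order), so
$$\Phi^{-1}\bigl(\sigma^{-1}(S)\cap W\bigr)=\mathcal{O}^n\times (S\cap V)\times \mathrm{St}(n,t).$$
By Fubini on the product manifold with its Haar/Lebesgue measures, this set is null iff $S\cap V$ is null. Chaining: $\sigma^{-1}(S)$ null $\Leftrightarrow$ $\sigma^{-1}(S)\cap W$ null $\Leftrightarrow$ $\Phi^{-1}(\sigma^{-1}(S)\cap W)$ null $\Leftrightarrow$ $S\cap V$ null $\Leftrightarrow$ $S$ null, finishing the proof.

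The main obstacle I anticipate is the clean verification that $\Phi$ is a local diffeomorphism onto $W$ with constant finite fiber size $2^n$; one must check that the differential of $\Phi$ has full rank everywhere on the source and that no further identifications arise when the singular values are distinct and positive. An alternative, more analytic route, which avoids the explicit parameterization, is to apply the coarea formula to the $1$-Lipschitz map $\sigma$ on $W$: its $n$-dimensional Jacobian is strictly positive on $W$, the fibers $\sigma^{-1}(y)$ over $y\in V$ are compact smooth submanifolds of constant dimension $nt-n$ with positive Hausdorff measure depending continuously on $y$, and the resulting identity $\int_{W}\chi_{\sigma^{-1}(S)}|J_n\sigma|\,dX=\int_{S\cap V}\mathcal{H}^{nt-n}(\sigma^{-1}(y))\,dy$ yields both implications at once.
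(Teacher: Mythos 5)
Your proposal is essentially the same as the paper's proof: the paper also works with the parameterization $\Gamma\colon \mathcal{O}^n\times \mathcal{O}^t_n\times\R^n\to\R^{n\times t}$, $\Gamma(U,V,x)=U(\mathrm{sDiag}\,x)V^\top$ (where $\mathcal{O}^t_n$ is your Stiefel manifold), verifies the dimension count $nt$, and reduces to the open set of distinct nonzero singular values — your $V$ corresponds to the complement of the paper's bad set $\Delta=\{|x_i|=|x_j|\text{ for some }i\neq j\}$. Your Fubini packaging, where you restrict the domain to $\mathcal{O}^n\times V\times \mathrm{St}(n,t)$ up front so that $\Phi^{-1}(\sigma^{-1}(S)\cap W)=\mathcal{O}^n\times(S\cap V)\times\mathrm{St}(n,t)$ exactly, is a slightly tidier organization that handles both implications at once (the paper proves the easy direction, $S$ null $\Rightarrow\sigma^{-1}(S)$ null, using only smoothness of $\Gamma$, and reserves the local-diffeomorphism fact for the converse).

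The one place your argument is incomplete is exactly where you flag it: you assert, but do not verify, that $\Phi$ is a submersion (equivalently, a local diffeomorphism) on the source, rather than merely a map with discrete fibers of size $2^n$. Finite fibers alone would not suffice (think $z\mapsto z^2$ on $\C$). The paper does supply an argument here, and it is worth noting because it is indirect rather than computational: it stratifies the semi-algebraic map $\Gamma$ over $\Delta^c$ into strata of constant rank, and shows that if the derivative had deficient rank on a top-dimensional stratum, the constant rank theorem would produce a nontrivial smooth path on which $\Gamma$ is constant, contradicting uniqueness of the SVD when the diagonal entries are distinct. Your sketched coarea alternative is a genuinely different route; if fleshed out it would likewise need the positivity of $|J_n\sigma|$ on $W$, which is the same local-diffeomorphism fact in disguise.
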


We have placed the proof of the lemma above at the end of the section so as to not stray from the narrative.

\begin{theorem}{\rm ({ED critical points of orthogonally invariant matrix sets})}  \label{thm:spectral_calculus} {\hfill \\ }
	Consider an absolutely symmetric set $S\subseteq \R^n$ and a matrix 
	$Y\in\R^{n\times t}$ along with a singular value decomposition $Y=\overline{U}(\Diag \sigma(Y))\overline{V}^\top$. 
	Suppose moreover, that $Y$ has all distinct singular values. 
	Then for the orthogonally invariant set $\M:=\sigma^{-1}(S)$, we have
\begin{equation}\label{eqn:crit_corr}
			{\rm C}_{\M}(Y)=\{\overline{U} \big(\Diag\omega\big)\overline{V}^\top\ \colon  \ 
	\omega\in {\rm C}_{S}(\sigma(Y))
	\}.
	\end{equation}
	Consequently, we also obtain the equality $C^\#(\mathcal{M})=C^\#(\mathcal{S})$.
\end{theorem}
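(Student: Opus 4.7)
My plan is to split the proof into two parts: first establish the set equality \eqref{eqn:crit_corr} from the normal cone formula for $\sigma^{-1}(S)$ stated just before the theorem, and then deduce $C^\#(\M)=C^\#(S)$ using Lemma~\ref{lem:meas}.

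For the forward inclusion in \eqref{eqn:crit_corr}, fix $X\in C_\M(Y)$. Smoothness transfer gives $\sigma(X)\in S^*$, and the normal cone formula produces orthogonal $U,V$ and $z\in \N_S(\sigma(X))$ with $X=U(\Diag\sigma(X))V^\top$ and $Y-X=U(\Diag z)V^\top$. Adding these, $Y=U(\Diag w)V^\top$ where $w:=\sigma(X)+z$. Let $\pi\in\Pi_n^{\pm}$ be the signed permutation taking $w$ to $\sigma(Y)$ and $P,Q$ the associated signed-permutation matrices, so that $\Diag w=P(\Diag\sigma(Y))Q^\top$. Substituting gives the SVD
\[
Y=(UP)(\Diag\sigma(Y))(VQ)^\top.
\]
Since $Y$ has all distinct singular values, the SVD is essentially unique: $UP=\overline U D$ and the first $n$ columns of $VQ$ agree with those of $\overline V D$ for some $D=\Diag(\pm 1)$. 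Substituting back and using that signed-permutation matrices conjugate diagonal matrices to diagonal matrices, a direct calculation yields $X=\overline U(\Diag\omega)\overline V^\top$ with $\omega:=\pi\sigma(X)$. Absolute symmetry of $S$ (hence of $S^*$ and equivariance of $\N_S$ under $\Pi_n^{\pm}$) then shows $\omega\in S^*$ and $\sigma(Y)-\omega=\pi z\in\N_S(\omega)$, i.e.\ $\omega\in C_S(\sigma(Y))$. The reverse inclusion follows by running the same calculation backwards: given $\omega\in C_S(\sigma(Y))$, the matrix $X:=\overline U(\Diag\omega)\overline V^\top$ lies in $\M^*$ by the smoothness transfer, and pulling $\sigma(Y)-\omega\in\N_S(\omega)$ back through the signed permutation that orders $|\omega|$ shows $Y-X\in\N_\M(X)$.

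For the count $C^\#(\M)=C^\#(S)$, note that each side is by definition an essential supremum with respect to Lebesgue measure. The set $D_S:=\{x\in\R^n:|x_i|=|x_j|\text{ for some }i\neq j\}$ is absolutely symmetric and Lebesgue null, so by Lemma~\ref{lem:meas} its preimage $\sigma^{-1}(D_S)$, the locus of matrices with repeated singular values, is also null. Off this null set, \eqref{eqn:crit_corr} is a bijection and $C^\#_\M(Y)=C^\#_S(\sigma(Y))$. It then suffices to verify the symmetrization identity: for any absolutely symmetric $f:\R^n\to\NN\cup\{+\infty\}$,
\[
\mathrm{ess\,sup}_{Y\in\R^{n\times t}}\,f(\sigma(Y))=\mathrm{ess\,sup}_{x\in\R^n}\,f(x).
\]
This follows by applying Lemma~\ref{lem:meas} to the (absolutely symmetric) level sets $\{x:f(x)>c\}$: such a set is null in $\R^n$ if and only if its preimage $\{Y:f(\sigma(Y))>c\}$ is null in $\R^{n\times t}$. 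Applying the identity to $f=C^\#_S$, which is absolutely symmetric by absolute symmetry of $S$, delivers $C^\#(\M)=C^\#(S)$.

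The principal technical obstacle is the signed-permutation bookkeeping in the first part: converting the raw decomposition $Y=U(\Diag w)V^\top$ into a genuine ordered SVD, and then invoking SVD uniqueness carefully in the rectangular case $n<t$ (where the last $t-n$ columns of the right factor carry genuine rotational freedom that must be seen to be irrelevant to the final identity $X=\overline U(\Diag\omega)\overline V^\top$, and similarly for any vanishing trailing singular value of $Y$). The second part is, by contrast, a clean essential-supremum-via-symmetrization argument carried entirely by Lemma~\ref{lem:meas}.
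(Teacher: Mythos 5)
Your proposal is correct and follows essentially the same route as the paper: the normal-cone transfer formula and SVD uniqueness (with the signed-permutation bookkeeping you flag, which the paper also glosses over) give the set identity \eqref{eqn:crit_corr}, and Lemma~\ref{lem:meas} drives the count $C^\#(\M)=C^\#(S)$. The only cosmetic difference is in the second part, where you fold the paper's two-sided null-set exchange (its auxiliary constructions $\widehat{\Gamma}$ and $\widetilde{\Gamma}$) into a single essential-supremum symmetrization identity applied to the level sets of the absolutely symmetric function $C^\#_S$; the underlying content is identical.
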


\begin{proof} 
	Consider a critical point $X\in {\rm C}_{\M}(Y)$. Then there exist orthogonal matrices 
	$U$ and $V$, and a vector $z\in \N_{S}(\sigma(X))$ satisfying 
	$$X=U (\Diag \sigma(X)) V^\top \quad \textrm{ and }\quad Y=X+U (\Diag z) V^\top.$$
	Hence we deduce
	$$Y =U (\Diag (\sigma(X) +z))V^\top.$$
	By uniqueness of singular values and singular vectors, there exists 
	a signed permutation matrix $\pi\colon\R^n\to\R^n$ satisfying $$\sigma(Y)=\pi(\sigma(X)) +\pi(z).$$
	On the other hand, it is easy to verify the inclusion 
	$\pi(z)\in\N_{S}(\pi \sigma(X))$.
	Therefore, we obtain $\pi(\sigma(X))\in C_S(\sigma(Y))$.
	
	Define now the extended signed permutation matrix $\hat{\pi}:=\left[\begin{matrix}
	\pi & 0\\
	0 & I 
	\end{matrix}\right] \in \R^{t\times t}$.  
	Observe then the equality 
	$$Y =U (\Diag \pi^{-1}(\sigma(Y)))V^\top=(U\pi) (\Diag \sigma(Y))(V\hat{\pi})^\top.$$
	Changing the left and the right singular vector pairs in $U$ and in $V$ by the same sign change in the first place, if needed, we can ensure $U\pi=\overline{U}$ and $V\hat{\pi}=\overline{V}$. Hence,
	$$X=U (\Diag \sigma(X)) V^\top=\overline{U} (\Diag \pi(\sigma(X))) \overline{V}^\top,$$
	as claimed.

	Now to prove the reverse inclusion, consider a point $\omega\in {\rm C}_S(\sigma(Y))$. 
	Thus there exists a vector $z\in \N_S(\omega)$ satisfying
	$$\sigma(Y)-\omega=z.$$
	Choose a signed permutation $\pi$ so that $\pi(\omega)$ is nonnegative and nonincreasing. Observe then that
	$$\pi(\sigma(Y))-\pi(\omega)=\pi(z)\in \N_S(\pi(\omega)).$$
	Defining $$X:=\overline{U} (\Diag \omega) \overline{V}^\top=(\overline{U}\pi) (\Diag \pi(\omega)) (\overline{V}\hat{\pi})^\top,$$ we deduce
	$$X+(\overline{U}\pi) (\Diag \pi(z)) (\overline{V}\hat{\pi})^\top=(\overline{U}\pi)  (\Diag\pi(\sigma(Y))) (\overline{V}\hat{\pi})^\top=Y.$$
	The inclusion $X\in {\rm C}_S(Y)$ follows. This establishes equation \eqref{eqn:crit_corr}.
	
	Define now the set 
	$$\Delta:=\{x\in\R^n: |x_i|=|x_j| \textrm{ for some indices } i\neq j\}.$$
	Consider a Lebesgue null set $\Gamma\subseteq\R^n$. Then clearly the absolutely symmetric set $\widehat{\Gamma}:=\Delta\cup\Pi^{\pm}_n\Gamma$ is Lebesgue null as well. Lemma~\ref{lem:meas} shows that $\sigma^{-1}(\widehat{\Gamma})$ is also Lebesgue null. 
Note  for all $\pi\in\Pi^{\pm}_{n}$ and $y\in\R^n$, one has $C_S(\pi y)=\pi C_S(y)$ and hence 
$C^\#_S(\pi y) = C^\#_S(y)$. Combining these observations, we obtain
	$$C^\#(\mathcal{M})\leq \sup_{Y\in \sigma^{-1}({\widehat{\Gamma}}^c)} C^\#_{\mathcal{M}}(Y)=\sup_{y\in {\widehat{\Gamma}}^c} C^\#_{S}(y)\leq \sup_{y\in \Gamma^c} C^\#_{S}(y),$$
	where the middle equality follows from equation \eqref{eqn:crit_corr}.
	Since $\Gamma$ was an arbitrary null set, we deduce $C^\#(\mathcal{M})\leq C^\#(S)$. 
	
	To see the reverse inequality, consider a Lebesgue null set $\Gamma\subseteq\R^{n\times t}$. 
           Consider the following subset of $\Gamma$: $$\widehat{\Gamma}:=\{Y\in \Gamma : UYV^\top\in \Gamma \textrm{ for all } U\in\mathcal{O}^n, V\in\mathcal{O}^t\}.$$
	Clearly $\widehat{\Gamma}$ is an orthogonally invariant Lebesgue null set.
	Now trivially we have
	$$\sup_{Y\in\widehat{\Gamma}^c} C^\#_{\mathcal{M}}(Y)\geq \sup_{Y\in\Gamma^c} C^\#_{\mathcal{M}}(Y).$$
	On the other hand, it is easy to verify the equation $C_{\mathcal{M}}(UYV^\top)=U(C_{\mathcal{M}}(Y))V^\top$ 
and thus, $C^\#_{\mathcal{M}}(UYV^\top) = C^\#_{\mathcal{M}}(Y)$ for all $Y\in\R^{n\times t}$ and $U\in\mathcal{O}^n,V\in\mathcal{O}^t$. Hence 
	$$\sup_{Y\in\widehat{\Gamma}^c} C^\#_{\mathcal{M}}(Y) =\sup_{Y\in\Gamma^c} C^\#_{\mathcal{M}}(Y).$$
	Notice $\widetilde{\Gamma}:=\widehat{\Gamma}\cup \sigma^{-1}(\Delta)$ is also a Lebesgue null orthogonally invariant set. Hence we deduce
	$$\sup_{Y\in\Gamma^c} C^\#_{\mathcal{M}}(Y)\geq \sup_{Y\in \widetilde{\Gamma}^c} C^\#_{\mathcal{M}}(Y)=\sup_{y\in \sigma(\widetilde{\Gamma})^c} C^\#_{S}(y)\geq C^\#(S),$$
	where the middle equality follows from equation 
	\eqref{eqn:crit_corr} and last inequality follows from Lemma~\ref{lem:meas}. Since $\Gamma$ was an arbitrary Lebesgue null set, we deduce the reverse inequality $C^\#(\mathcal{M})\geq C^\#(S)$, and hence equality, as claimed.
\end{proof}

The assumption that singular values of the data point $Y$ are distinct is essential 
 in Theorem~\ref{thm:spectral_calculus} as the following example shows.

\begin{example}[Nondistinct singular values] 
	{\rm
		Consider the orthogonally invariant set $\M = \{X\in \RR^{2\times 2} \ : \  \det(X) = 0\}$. We may 
		write $\M=\sigma^{-1}(S)$ where $S:=\{ (x_1,x_2)\in \RR^2  \ : \   x_1 x_2  =0\}$ is absolutely symmetric.
		Then ${\rm C}_S((1,1)) = \{ (1,0), (0,1) \}$. On the other hand, one can verify that 
		$uu^\top\in {\rm C}_{\M}(I_2)$ for each $u\in\R^2$ of norm one, where $I_2$ is the $2\times 2$ identity matrix.
	}
\end{example}

We now apply Theorem~\ref{thm:spectral_calculus} to our two running examples.

\begin{example}[Matrices of rank at most $r$]
{\rm Fix $r\leq n$ and recall the orthogonally invariant set 
$$
\RR^{n\times t}_r = \{X\in \RR^{n\times t} \ : \  \rank(X) \leq r\}.
$$
This set is a determinantal variety cut out by the $(r+1)\times (r+1)$ minors of a symbolic $n\times t$ matrix. 
One sees that 
$\sigma^{-1}(\RR^n_r) = \RR^{n\times t}_r$ 
where $\RR^n_r$ is the absolutely symmetric set defined in \eqref{eq:def Rnr}.
Theorem \ref{thm:spectral_calculus} and Example~\ref{ex:rank} together imply that 
$$
C^\#(\RR^{n\times t}_r) = C^\#(\RR^n_r) = {n \choose r}. 
$$
This number is also the ED degree of $\RR^{n\times t}_r$ (cf. \cite[Example 2.3]{DHOST}); we will revisit this point in Section~\ref{sect:eddegree}. In particular, the ED critical points of a general data matrix $Y$, with singular value decomposition 
$Y = U (\Diag \, \sigma(Y))V^\top$, are the ${n \choose r}$ matrices obtained by setting to zero all possible choices of 
$n-r$ singular values in this singular value decomposition. The matrix 
$$U \  (\Diag (\sigma_1(Y), \ldots, \sigma_r(Y), \underbrace{0, \ldots, 0}_{n-r})) \  V^\top $$ is a  nearest element of $\RR^{n\times t}_r$ to $Y$.
This is precisely the statement of the classical {\em Eckart-Young theorem}.
}
\end{example}

\begin{example}[Essential variety]  \label{ex:essential variety}
{\rm 
The essential variety $\E$ defined in \eqref{eqn:essential}
is  orthogonally invariant with 
$
\E = \sigma^{-1}(E_{3,2})
$
where $E_{3,2}$ is the absolutely symmetric set defined in Example \ref{ex:Enk}.
Then by Theorem \ref{thm:spectral_calculus} and Example~\ref{ex:Enk},
$$
C^\#(\E) = C^\#(E_{3,2}) = 6.
$$
Furthermore, for a matrix $Y\in \RR^{3\times 3}$ along with a singular value decomposition
$
Y = U (\Diag \sigma(Y)) V^\top,
$
by (\ref{eq:spectral proj}), the matrix 
$$
U \ \left(\Diag \lt( \f{\sigma_1(Y)+\sigma_2(Y)}{2}, \f{\sigma_1(Y)+\sigma_2(Y)}{2}, 0\rt) \right) \ V^\top
$$
is a nearest element of $\E$ to $Y$. 
This is precisely Hartley's result \cite[Theorem 5]{hartley} which is well known in the 
computer vision community. The six ED critical points of a general $Y = U (\Diag \, \sigma(Y))V^\top$ are the matrices 
$U(\Diag \, x)V^\top$ where $x$ varies over the following vectors:\\
\begin{small}
$$
\begin{array}{cc}
 \lt( \f{\sigma_1(Y)+\sigma_2(Y)}{2}, \f{\sigma_1(Y)+\sigma_2(Y)}{2}, 0\rt), & 
 \lt( \f{\sigma_1(Y)-\sigma_2(Y)}{2}, \f{-\sigma_1(Y)+\sigma_2(Y)}{2}, 0\rt), \\
  \lt( \f{\sigma_1(Y)+\sigma_3(Y)}{2}, 0, \f{\sigma_1(Y)+\sigma_3(Y)}{2}\rt), & 
 \lt( \f{\sigma_1(Y)-\sigma_3(Y)}{2}, 0, \f{-\sigma_1(Y)+\sigma_3(Y)}{2}\rt),\\
 \lt( 0,\f{ \sigma_2(Y)+\sigma_3(Y)}{2}, \f{\sigma_2(Y)+\sigma_3(Y)}{2}\rt), & 
 \lt(0, \f{\sigma_2(Y)-\sigma_3(Y)}{2}, \f{-\sigma_2(Y)+\sigma_3(Y)}{2}\rt). \\
\end{array}
$$
\end{small}
}
\end{example}

We end the section with the proof of the lemma evoked in the proof of Theorem~\ref{thm:spectral_calculus}.


\begin{proof}[Proof of Lemma~\ref{lem:meas}]
	Before we get to the main part of the proof, we need some notation. To this end, 	
	define $\mathcal{O}^{t}_n$ to be the set of $t\times n$ matrices with orthonormal columns. 
          Note that $\mathcal{O}^{t}_n$ is a smooth manifold of dimension $nt-\frac{n(n+1)}{2}$, and is called a {\em Stiefel manifold}; 
          see for example \cite[Proposition A.4]{burgisser}.
          In particular $\mathcal{O}^n$ is a manifold of dimension $\frac{n(n-1)}{2}$.
           We deduce that the product manifold
	$\mathcal{O}^n\times\mathcal{O}^{t}_n\times \R^n$ has dimension exactly $nt$, same as $\R^{n\times t}$.
	For any vector $x\in\R^n$ let $\textrm{sDiag } x$ denote the $n\times n$ diagonal matrix with $x$ on the diagonal.
	Define now the mapping $\Gamma\colon \mathcal{O}^n\times\mathcal{O}^{t}_n\times \R^n \to \R^{n\times t}$ by setting $\Gamma(U,V,x)=U(\textrm{sDiag } x) V^\top$. 
	Notice that for the absolutely symmetric set $S$, we have equality $\Gamma(\mathcal{O}^n\times\mathcal{O}^{t}_n\times S)=\sigma^{-1}(S)$.

	Suppose now that $S$ is a Lebesgue null set. Then clearly the set $\mathcal{O}^n\times \mathcal{O}^{t}_n\times S$ is Lebesgue null in the manifold $\mathcal{O}^n\times\mathcal{O}^{t}_n\times \R^n$. Since $\Gamma$ is smooth, the image $\Gamma(\mathcal{O}^n\times \mathcal{O}^{t}_n\times S)$, which coincides with $\sigma^{-1}(S)$, is Lebesgue null $\R^{n\times t}$, as claimed.
	Conversely, suppose that $\sigma^{-1}(S)$ is Lebesgue null.
	It is well-known that $\Gamma$ is a local diffeomorphism on an open full-measure subset
	of the domain manifold $\mathcal{O}^n\times \mathcal{O}^{t}_n\times \R^n$.
	To see this quickly, define the set 
	$$\Delta=\{x\in\R^{n}: |x_i|= |x_j| \textrm{ for some indices }i\neq j\}.$$
	Observe that since $\Gamma$ is a semi-algebraic map, we may stratify $\mathcal{O}^n\times \mathcal{O}^{t}_n\times \Delta^c$ into finitely many smooth manifolds ${\mathcal{M}_i}$ so that the restriction of $\Gamma$ to each $\mathcal{M}_i$ has constant rank. Suppose now for the sake of contradiction that the derivative of $\Gamma$ has deficient rank on some maximal dimensional manifold $\mathcal{M}_i$. Then by the constant rank theorem \cite[Theorem 5.13]{Lee2},
	there exists a nontrivial smooth path $(U(t),V(t),x(t))$ in $\mathcal{M}_i$ 
	so that	$\Gamma$ is constant on the path, meaning that the matrices $U(t)(\textrm{sDiag } x(t)) V(t)^\top$ are equal for all $t$. On the other hand, taking into account that the coordinates of $x(t)$ are distinct and appealing to uniqueness of singular values and singular vectors, we obtain a contradiction. 
Hence $\Gamma$ is a local diffeomorphism on an open full measure subset of $\mathcal{O}^n\times \mathcal{O}^{t}_n\times \R^n$. 
It follows immediately that $S$ is Lebesgue null, since otherwise the image $\Gamma(\mathcal{O}^n\times \mathcal{O}^{t}_n\times S)$ would fail to be Lebesgue null. This completes the proof.
\end{proof}

\section{Applications} \label{sect:applications}

We now apply the techniques developed in the last section to calculate and count the ED critical points of several orthogonally invariant matrix sets.

\begin{example}[Matrices orthogonally equivalent to a given matrix]   \label{ex:OE}
{\rm
Fix a matrix $A\in \RR^{n\times t}$. We say $X\in \RR^{n\times t}$ is 
{\em orthogonally equivalent} to $A$ if $X = UAV^\top$ for some $U\in \mathcal{O}^n$ and $V\in \mathcal{O}^t$. 
Let $\mathcal{M}_A\subseteq \RR^{n\times t}$ be the set of matrices that are orthogonally equivalent to $A$. 
Then $\M_A$ is orthogonally invariant and $\M_A= \sigma^{-1} (\sigma(A))$. By  
Theorem \ref{thm:spectral_calculus}, 
$$C^\#(\mathcal{M}_A) = |\Pi_n^\pm \{\sigma(A)\}| = 2^n \f{n!}{n_1! \ldots n_k!}$$
where $\sigma(A) = (\underbrace{\sigma_1,\ldots,\sigma_1}_{n_1}, \ldots,\underbrace{\sigma_k,\ldots,\sigma_k}_{n_k} )$.
}
\end{example}

\begin{example}[Real orthogonal group] \label{ex:On}
{\rm
The real orthogonal group $\mathcal{O}^n$ is a special case of the previous example 
because $\mathcal{O}^n$ is the set of $n\times n$ real matrices that are orthogonally equivalent to
the $n\times n$ identity matrix, namely, 
$$\mathcal{O}^n = \sigma^{-1} ((\underbrace{1,\ldots,1}_{n})).$$
Hence $C^\#(\mathcal{O}^n) = 2^n$ which is also the ED degree of $\mathcal{O}^n$
(cf. \cite[Theorem 3.2]{draisma}).

By Proposition~\ref{prop:proj transfer}, for any $Y\in \RR^{n\times n}$ with singular value decomposition
$Y  = U (\Diag \sigma(Y)) V^\top$ where  $U,V\in \mathcal{O}^n$,  
one has $UV^\top \in {\rm proj}_{\mathcal{O}^n} (Y)$. Moreover, if the singular values of $Y$ are distinct, then by 
Theorem~\ref{thm:spectral_calculus}, the set of 
critical points of $Y$ on $\mathcal{O}^n$ is $\{ U ( {\rm Diag } \,x)V^\top \,:\, x_i = \pm 1 \,\,\forall \,\,i \}$.
These results were also obtained by Draisma and Baaijens \cite{draisma} using algebraic methods.
}
\end{example}

\begin{example}[Unit sphere of Schatten $d$-norm]  \label{ex:unit ball}
{\rm
The Schatten $d$-norm of a matrix $X\in \RR^{n\times t}$ is defined as 
$$
\|X\|_d := \lt[  \sum_{i=1}^n (\sigma_i (X))^d  \rt]^{1/d}.
$$
Note the following fact: $\|U X V^\top\|_d = \|X\|_d$ for any $X\in \RR^{n\times t}$, $U\in \mathcal{O}^n$, 
$V \in \mathcal{O}^t$. 
Hence the unit sphere of the Schatten $d$-norm
$$
\mathcal{F}_{n,t,d} := \{X\in \RR^{n\times t} \ : \   \|X\|_d = 1\} $$ 
is orthogonally invariant. Assume $d\geq 2$ is an even integer. One has 
$
\mathcal{F}_{n,t,d} = \sigma^{-1}(F_{n,d})
$
where $F_{n,d}$ is the {\em affine Fermat hypersurface} 
\begin{align*} 
F_{n,d} := \lt\{x\in \RR^n \ : \  \sum_{i=1}^n x_i^d = 1\rt\}
\end{align*}
which is absolutely symmetric. Therefore, $C^\#(\mathcal{F}_{n,t,d}) = C^\#(F_{n,d})$ for all $t$.

\begin{figure}
\centering
\includegraphics[scale = 0.35]{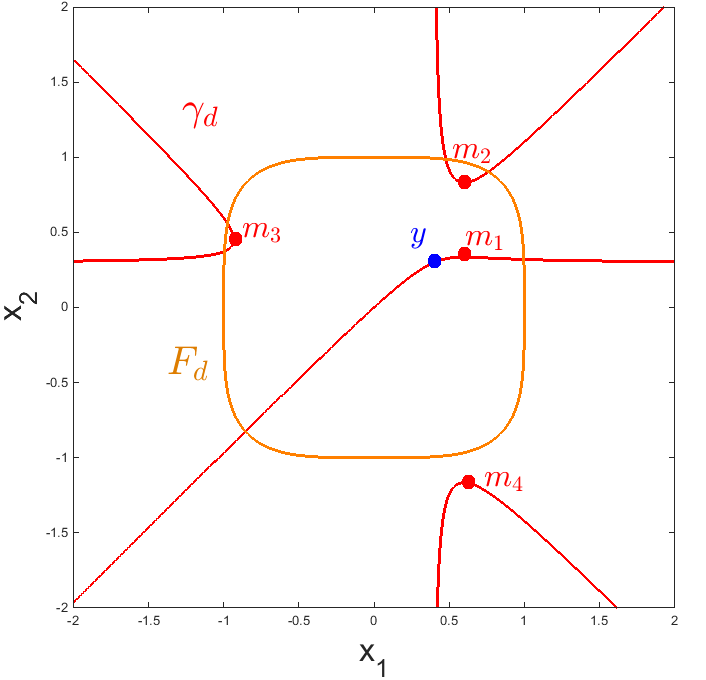}
\caption{The curves $F_d$ and $\gamma_d$ in Example \ref{ex:unit ball} for $d=4$} \label{fig:fermat}
\end{figure}

Consider the special case $n=2$.  For simplicity, we set $F_d := F_{2,d}$. Then $F_2$ is the unit circle 
and $C^\#(F_{2}) = 2$. Suppose $d\geq 4$.
For any $y\in \RR^2$, a point $x$ is an ED critical point of $y$ on $F_d$ if and only if 
$x$ lies on both $F_d$ and the curve 
\begin{align} \label{eq:fermat system}
 \gamma_d:= \{x\in \RR^2 \ : \  x_1^{d-1}(x_2-y_2) = x_2^{d-1}(x_1-y_1)\}.
\end{align}
The graph of $\gamma_4$ is shown in Figure \ref{fig:fermat}.
By symmetry, we may assume that $y_1>y_2>0$. 
When $y_1$ is small, the ``optimal'' points, $m_1,m_2,m_3,m_4$, 
on the pieces of $\gamma_d$ in the coordinate directions lie inside the curve $F_d$, namely,  
$$m_i \in \{x\in \RR^2 \ : \  x_1^d + x_2^d <1\}$$ for $i =1,\ldots,4$.
Hence, 
$C^\#_{F_{d}}(y) \leq 8$ for any point $y$ except on a null set, and 
there is an open set $V$ in $\RR^2$ such that $C^\#_{F_{d}}(y) = 8$ for any $y\in V$.
To sum up, by Theorem \ref{thm:spectral_calculus}, we have
\begin{align*} 
C^\# (\mathcal{F}_{2,t,d}) = C^\#(F_d) = 
\begin{cases}
2 & \mbox{ if } d =2\\
8 & \mbox{ if } d\geq 4.
\end{cases}
\end{align*}
}
\end{example}

\begin{example}[$SL_n^\pm$]  \label{ex:special linear}
{\rm
This example was considered in \cite[Section 4]{draisma}. Define the orthogonally invariant set 
$$
SL_n^\pm := \{X\in \RR^{n\times n} \ : \   \det (X) = \pm 1\} = \sigma^{-1}(H_n)
$$
where  $H_n$ is the absolutely symmetric set defined as 
\begin{align*} 
H_n := \{x\in \RR^n \ : \  x_1 \cdots x_n = \pm 1\}.
\end{align*}
Consider the special case $n=2$. 
We evaluate $C^\# (SL^\pm_2)$ by computing $C^\#(H_2)$.
The set $H_2$ is a disjoint union of two hyperbolas:
$$
H_2^\pm := \{ x\in \RR^2 \ : \  x_1x_2 = \pm 1\}.
$$
Since these hyperbolas are disjoint, for any $y$, one has 
$$
C^\#_{H_2}(y) = C^\#_{H_2^+}(y) + C^\#_{H_2^-}(y).
$$
Notice that each of the sets ${\rm C}_{H_2^\pm}(y)$ is defined by the roots of  
a univariate quartic. Indeed, 
$$ C_{H_2^+}(y) =  \left\{ \left(x,\frac{1}{x} \right) \in \RR^2 \ : \ q_y^+(x) := x^4- x^3 y_1 + xy_2-1 = 0 \right\}$$ 
and 
$$ C_{H_2^-}(y) = \left\{ \left(x, \frac{-1}{x} \right) \in \RR^2 \ : \ q_y^-(x) := x^4- x^3 y_1 - xy_2-1 = 0 \right\}.$$ 
By computing the trace forms of these quartics and applying Sylvester's criterion (see e.g. \cite[Corollary 2.9]{sturmfels2002}) 
we acquire these results:
$$
C^\#_{H_2}(y) = 
\begin{cases}
6 & \mbox{ if } D^+(y)>0 \text{ or } D^-(y)>0\\
4 & \mbox{ if } D^+(y)<0 \textup{ and } D^-(y)<0 
\end{cases}
$$
where the bivariate polynomials 
$$D^+(y) := -256 + 192y_1 y_2  + 6 y_1^2 y_2^2  
+ 4y_1^3 y_2^3   -27 y_1^4   - 27 y_2^4$$
and  
$$D^-(y) := -256 - 192y_1 y_2  + 6 y_1^2 y_2^2  
- 4y_1^3 y_2^3   -27 y_1^4   - 27 y_2^4$$
are the discriminants of $q^+_y$ and $q^-_y$ respectively; see Figure \ref{fig:chamber}.
Then by Theorem \ref{thm:spectral_calculus}, we know 
$$C^\# (SL^\pm_2) = 6.$$ 
}
\end{example}

\begin{figure}
\includegraphics[scale = 0.5]{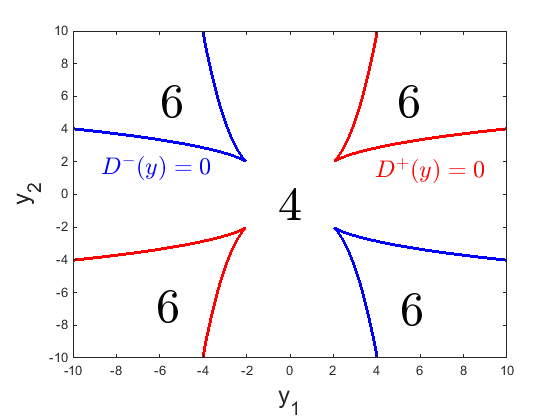}
\caption{Values of $C^\#_{H_2}(y)$ for general $y$, in Example \ref{ex:special linear}} \label{fig:chamber}
\end{figure}

\section{Connections to the Euclidean distance degree} \label{sect:eddegree}
As mentioned in the introduction, this work was inspired by the general framework in \cite{DHOST} for counting the critical points of the squared Euclidean distance function on an algebraic variety. In this section we comment on the relationships between our results and those in \cite{DHOST}, beginning 
with some basic facts about real varieties \cite{whitney,BCR}.

Consider a real variety  $\V \subseteq \RR^n$ whose vanishing ideal 
is generated by the polynomials $f_1,\ldots,f_s \in \RR[x_1,\ldots,x_n]$. 
Then the {\em Zariski closure} of $\V$ is $\V_\CC := \{ x \in \CC^n \,:\, f_1(x) = \ldots = f_s(x)=0 \}$, the smallest complex variety containing $\V$. The real part of $\V_\CC$, 
i.e., $\V_\CC \cap \RR^n$, is precisely $\V$ and hence, the vanishing ideal of 
$\V_\CC$ in $\CC[x_1,\ldots,x_n]$ is also generated by $f_1, \ldots, f_s$. Also, both $\V$ and $\V_\CC$ have the same dimension over $\R$ and $\C$ respectively. 
Recall that if $\V_\CC = \bigcup_{j=1}^t \mathcal{W}^j$ is a minimal irreducible decomposition of $\V_\CC$, 
then for any $j$, the real part of $\mathcal{W}^j$, denoted by $\mathcal{U}^j$, is an irreducible 
real variety whose Zariski closure is $\mathcal{W}^j$. Moreover, $\V = \bigcup_{j=1}^t \mathcal{U}^j$ 
is a minimal irreducible decomposition of $\V$.  For simplicity, in the rest of this section we will assume that $\V_\CC$ is irreducible.

Denote by $J(f)$ the $s \times n$ {\em Jacobian} matrix whose $(i,j)$-entry is $\frac{\partial f_i}{\partial x_j}$. Then a point $x \in \V_\CC$ is said to be {\em regular} 
if the rank of the Jacobian matrix evaluated at $x$, $\rank(J(f)(x))$, equals the codimension of $\V_\CC$. Let $\V_\CC^{\textup{reg}}$ denote the regular points of $\V_\CC$. It is known that $\V_\CC^{\textup{reg}}$ is a Zariski open subset of $\V_\CC$, and its complement is a proper subvariety in $\V_\CC$, denoted as ${\rm Sing}(\V_\CC)$, and called the {\em singular locus} of $\V_\CC$. 
The set $\V^{\rm reg}$ of regular points in $\V$ is precisely the set of real points in $\V_\CC^{\textup{reg}}$. However,
 while $\V_\CC^{\textup{reg}}$ is dense in $\V_\CC$ in the Euclidean topology, $\V^{\textup{reg}}$ may not be dense in $\V$ or even
 $\V^*$, the set of smooth (i.e., $C^p$-smooth) points in $\V$. The following example illustrates this behavior.
 
 \begin{example} [Cartan umbrella] \label{ex:cartan}
{\rm The {\em Cartan umbrella} is the real variety 
$$ \V = \{ (x_1,x_2,x_3) \in \RR^3 \,:\, x_3(x_1^2+x_2^2)-x_1^3=0 \}.$$
Here $\V_\CC$ is irreducible, as is $\V$. On the other hand, $\V$ is also the union of a surface (which is not a real variety) and the $x_3$-axis (Figure~\ref{fig:cartanumbrella}).
\begin{figure} 
    \includegraphics[scale=0.5]{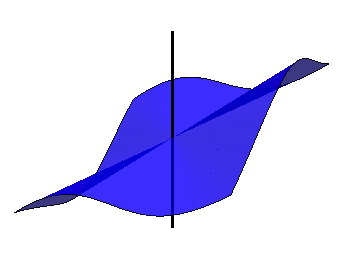}
    \caption{The Cartan umbrella}
    \label{fig:cartanumbrella}
\end{figure}
No point on the $x_3$-axis is regular while all points on the $x_3$-axis, except the origin, are smooth.}
\end{example}

This example shows an important distinction between smooth and regular points on a real variety. Our goal in this section is 
to elaborate on this difference in the context of critical points as defined in \cite{DHOST}. 
 Recall that the normal space at a regular point $x \in \V_\CC$, denoted as  $\mathcal{N}^{\rm alg}_{\V_\CC}(x)$, is the row space 
of $J(f)(x)$.
Given a data point $y \in \CC^n$, and a real variety $\V$, the authors of \cite{DHOST} study 
the set of critical points 
 \begin{align} \label{eq:alg crit point}
C^{\rm reg}_{\V_\CC}(y) := \left\{ x \in \V_\CC^{\rm reg} \,:\, y-x \in \mathcal{N}^{\rm alg}_{\V_\CC}(x) \right\}.
\end{align}
Note that in the above algebraic setting, even though our input is a real variety $\V \subseteq \RR^n$, 
there is a natural passage into the complex numbers; the real variety $\V$ is replaced by the 
complex variety $\V_\CC$, the data point can be any $y \in \CC^n$, and the critical points of 
$y$ are regular points in $\V_\CC$.
This complexification yields the first key result in \cite{DHOST}, namely that for a fixed $\V$, the number of 
critical points of a general data point $y \in \CC^n$ is a constant. In other words, the cardinality of $C^{\rm reg}_{\V_\CC}(y)$, 
which we will denote as $C^{\rm \# \,reg}_{\V_\CC}(y)$, is a constant. 
This constant is called the {\em Euclidean distance degree (EDdegree)} of $\V$ in \cite{DHOST}. The phrase ``general data point" refers 
to the fact that $C^{\rm \# \,reg}_{\V_\CC}(y)$ is different from ${\rm EDdegree}(\V)$ only if $y$ lies on 
certain proper subvarieties in $\CC^n$ which 
we will describe in the Appendix. We remark that if $\V_\CC$ is reducible, then ${\rm EDdegree}(\V)$ is the sum of the ED degrees of its 
irreducible components.

Our results in this paper provide a method to compute the (smooth) ED critical points of a real orthogonally invariant set of matrices. We did not require this 
set to be a real variety, and the notion of smoothness (of critical points) was geometric 
and not algebraic. Therefore, in order to connect our work to that in \cite{DHOST}, we need to restrict to real varieties 
$\V \subseteq \RR^n$,  data points $y \in \RR^n$, 
and understand the relationships between the following two sets of critical points of $y$:
\begin{itemize}
\item $C_{\V}(y)$, the set of ED critical points of $y$ on $\V$ as in Section~\ref{sect:ED critical points}, 
and 
\item $C_{\V}^{\rm reg}(y)$, the set of regular critical points of $y$ on $\V$.
\end{itemize}
As usual we denote the cardinalities of these sets by $C^\#_\V(y)$ and $C_\V^{\rm \# \,reg}(y)$ respectively.
Also, note that $C_{\V}^{\rm reg}(y) = C^{\rm reg}_{\V_\CC}(y) \cap \RR^n = C^{\rm reg}_{\V_\CC}(y) \cap \V$.
Since a regular point of $\V$ is $C^p$-smooth for $p\in \{\infty,\omega\}$,  for any $y$ we have,
\begin{align} \label{eq:reg is contained in smooth}
C_{\V}^{\rm reg}(y) \subseteq C_{\V}(y)\,\,\,\, \textup{ and so,} \,\,\,\, C_\V^{\rm \# \,reg}(y) \leq C^\#_\V(y).
\end{align}
However, for a given $y$, this inequality can be strict since there can be smooth points on $\V$ that are not regular as we saw in the Cartan umbrella. 
In fact, in the Cartan umbrella there is a dense set of $y$ for which the inequality in \eqref{eq:reg is contained in smooth} is strict since for any $y$ with $y_3 \neq 0$, there will be an ED critical point on the $x_3$-axis. 

Moving on to invariants of the entire set $\V$, on the one hand, we have the quantity
$C^\#(\V) = \inf_{\Gamma\in \Theta } \sup_{y\in \Gamma^c} C^\#_\V(y)$ from Section~\ref{sect:ED critical points}, 
and on the other hand, we have the constant ${\rm EDdegree}(\V)$ which is an upper bound to 
$C_{\V_\CC}^{\rm \# \, reg}(y)$ whenever $C_{\V_\CC}^{\rm reg}(y)$ is finite.
Since the inequality in \eqref{eq:reg is contained in smooth} can be strict, it is not clear that the number $C^\#(\V)$ is 
always a lower bound to $\textup{EDdegree}(\V)$.  However, it is true under certain conditions.

\begin{theorem} \label{thm:cV at most EDdegree}
If $\V$ is a real variety such that $\V^{\rm reg}$ is Euclidean dense in $\V^*$, then we have the inequality 
$C^\#(\V)\leq {\rm EDdegree}(\V)$.
\end{theorem}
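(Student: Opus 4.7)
The plan is to establish that for $y$ outside a Lebesgue null set in $\RR^n$, every smooth ED critical point of $y$ on $\V$ is automatically a regular point of $\V_\CC$, so that $|C_\V(y)| = |C^{\rm reg}_\V(y)| \leq |C^{\rm reg}_{\V_\CC}(y)| = {\rm EDdegree}(\V)$. First I would verify the bridge to the complex picture: for real $y$, since the defining polynomials have real coefficients, a real vector lies in the complex rowspan of the Jacobian at $x \in \V^{\rm reg}$ if and only if it lies in the real rowspan. Consequently $C^{\rm reg}_\V(y) = C^{\rm reg}_{\V_\CC}(y) \cap \RR^n$. By the definition of ${\rm EDdegree}(\V)$ recalled in this section, there is a proper complex subvariety of $\CC^n$ outside which $|C^{\rm reg}_{\V_\CC}(y)| = {\rm EDdegree}(\V)$; its intersection with $\RR^n$ is a proper real algebraic set, hence a Lebesgue null set $Z_1 \subseteq \RR^n$, outside which $|C^{\rm reg}_\V(y)| \leq {\rm EDdegree}(\V)$.

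The core work is to produce a null set $Z_2 \subseteq \RR^n$ such that for every $y \notin Z_2$ no ED critical point of $y$ lies in the exceptional locus $N := \V^* \setminus \V^{\rm reg}$. For each integer $d$ let
\[ M_d := \{ x \in \V^* : \V \textrm{ is locally a smooth manifold of dimension } d \textrm{ near } x \}. \]
Each $M_d$ is open in $\V^*$ and is a semi-algebraic smooth manifold of dimension $d$. Since $\V^{\rm reg}$ is Zariski open in $\V_\CC$, the set $\V^{\rm reg} \cap M_d$ is Euclidean open in $M_d$; combined with the hypothesis that $\V^{\rm reg}$ is Euclidean dense in $\V^*$ and the fact that $M_d$ is open in $\V^*$, we conclude that $\V^{\rm reg} \cap M_d$ is both open and dense in $M_d$. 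Hence $M_d \cap N$ is a closed, nowhere dense, semi-algebraic subset of $M_d$, and so has semi-algebraic dimension at most $d-1$. Assembling the relevant piece of the normal bundle,
\[ \Omega_N := \bigsqcup_d \big\{ (x,v) : x \in M_d \cap N,\ v \in \mathcal{N}_{M_d}(x) \big\}, \]
we obtain a semi-algebraic set of dimension at most $\max_d \big[(d-1) + (n-d)\big] = n-1$. Its image under the semi-algebraic endpoint map $\phi(x,v) := x+v$ is therefore semi-algebraic of dimension strictly less than $n$, hence Lebesgue null; set $Z_2 := \phi(\Omega_N)$. By construction, any ED critical point of $y$ lying in $N$ yields a point of $\phi^{-1}(y) \cap \Omega_N$, so for $y \notin Z_2$ no such critical point exists.

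Combining the two steps, for every $y$ outside the null set $Z_1 \cup Z_2$ one has $C_\V(y) = C^{\rm reg}_\V(y)$, and therefore
\[ C^\#_\V(y) \ = \ |C^{\rm reg}_\V(y)| \ \leq \ {\rm EDdegree}(\V). \]
By the definition of $C^\#(\V)$, this yields $C^\#(\V) \leq {\rm EDdegree}(\V)$. The main obstacle is the dimension count in the second step: one must be careful that the Euclidean density hypothesis transfers to each local-dimension stratum $M_d$ (which uses that $M_d$ is open in $\V^*$), and one must invoke the standard semi-algebraic fact that a nowhere dense semi-algebraic subset of a smooth semi-algebraic manifold has strictly smaller dimension, in order to control the dimension of the normal bundle piece over the singular sliver of $\V^*$.
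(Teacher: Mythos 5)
Your proof is correct and follows essentially the same route as the paper: both stratify $\V^*$ by local dimension, use the density hypothesis to deduce that $\V^{\mathrm{reg}}$ meets each stratum in an open dense set, and then bound the dimension of the normal bundle over the semi-algebraic sliver $\V^*\setminus\V^{\mathrm{reg}}$ to show the bad data points form a Lebesgue-null set. The only cosmetic difference is organizational: the paper packages this in Lemma~\ref{lem:suff cond} (which additionally produces an open set where $C^\#_\V(y)=C^\#(\V)$, used elsewhere), whereas you feed your two null sets directly into the infimum defining $C^\#(\V)$, which suffices for the inequality at hand.
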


This is an immediate consequence of the following lemma. Note that the Cartan umbrella does not satisfy the 
assumptions of Theorem~\ref{thm:cV at most EDdegree}.

\begin{lemma}  \label{lem:suff cond}
Consider a real variety $\V \subseteq \RR^n$ (possibly reducible) such that $\V^{\rm reg}$ is Euclidean dense in $\V^*$.
Then there is an open set of points $y\in \RR^n$ such that 
\begin{align} \label{lem:condition3}
& \text{for any irreducible component $\mathcal{W}$ of $\V$,  }
\text{$ C_\mathcal{W_\CC}^{\rm  \# \,reg} (y) = {\rm EDdegree}(\mathcal{W})$,}\\
& C^\#_\V(y) = C^\#(\V), \ \text{ and }     \label{lem:condition1} \\
& C_{\V}(y)\subseteq C_{\V}^{\rm reg}(y).\label{lem:condition2}
\end{align}
\end{lemma}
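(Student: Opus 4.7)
\medskip

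\noindent\emph{Proof proposal.} The plan is to produce three Euclidean open subsets $U_1,U_2,U_3$ of $\R^n$, one for each of the three required conclusions, and to intersect them; the intersection remains open and non-empty, which yields the lemma.

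For \eqref{lem:condition3}, I would invoke the ED discriminant theory of \cite{DHOST}: for each irreducible component $\mathcal{W}$ of $\V$, the equality $C_{\mathcal{W}_\CC}^{\#\,\mathrm{reg}}(y)=\textup{EDdegree}(\mathcal{W})$ holds whenever $y$ avoids a proper complex subvariety $D_{\mathcal{W}}\subset\CC^n$ that is defined over $\R$. Then $D_{\mathcal{W}}\cap\R^n$ is a proper real subvariety, so its complement is Euclidean open and dense in $\R^n$; intersecting these complements over the finitely many components of $\V$ yields $U_1$.

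For \eqref{lem:condition1}, I would reuse the semi-algebraic setup in the proof of Proposition~\ref{prop:semialgebraic}. By generic finiteness of the projection $\phi$ appearing there, $y\mapsto C^\#_\V(y)$ takes only finitely many values on a full-measure semi-algebraic subset of $\R^n$, so the level set $A:=\{y:C^\#_\V(y)=C^\#(\V)\}$ is semi-algebraic. Were $A$ to have empty interior, it would have dimension strictly less than $n$ and therefore be Lebesgue null; adjoining $A$ to a null set $\Gamma_0$ realising the infimum $C^\#(\V)=\inf_\Gamma \sup_{y\notin\Gamma}C^\#_\V(y)$ would then yield a null set off of which the sup of the integer-valued $C^\#_\V$ drops strictly below $C^\#(\V)$, contradicting that infimum. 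Hence $A$ has non-empty interior $U_2$.

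The main obstacle — and the heart of the argument — is \eqref{lem:condition2}, for which I propose a dimension count on the ``bad'' normal bundle over $\V^*\setminus\V^{\mathrm{reg}}$. Since $\V^{\mathrm{reg}}\subseteq\V^*$, the difference $\V^*\setminus\V^{\mathrm{reg}}$ lies in the singular locus, a proper subvariety of $\V$; therefore $\dim(\V^*\setminus\V^{\mathrm{reg}})<\dim\V=:d$. The density hypothesis enters crucially here: at any $x\in\V^*\setminus\V^{\mathrm{reg}}$, regular points of $\V$ accumulate to $x$, and since $\V^*$ is locally a smooth manifold at $x$, its local dimension must equal $d$. Consequently $\dim\mathcal{N}_{\V^*}(x)=n-d$, and the total space
\[
N_{\mathrm{bad}}:=\{(x,v):x\in\V^*\setminus\V^{\mathrm{reg}},\ v\in\mathcal{N}_{\V^*}(x)\}
\]
is semi-algebraic of dimension at most $(d-1)+(n-d)=n-1$. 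Its image under $(x,v)\mapsto x+v$ therefore has empty interior, and its complement furnishes the desired dense open set $U_3$. Without the density hypothesis, the local dimension of $\V^*$ at a non-regular smooth point could drop, enlarging the normal space and breaking the dimension count — precisely as in the Cartan umbrella of Example~\ref{ex:cartan}, where the $x_3$-axis contributes an ED critical point for every $y$ with $y_3\neq 0$. The sought set of data points is then $U_1\cap U_2\cap U_3$.
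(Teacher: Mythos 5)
Your proposal is correct, and for the first two conclusions it retraces the paper's argument almost verbatim: the paper also invokes \cite[Theorem 4.1]{DHOST} to get a proper subvariety $D\subset\CC^n$ of bad data points and then passes to $D\cap\R^n$ (the paper does not need, and does not assert, that $D$ is defined over $\R$ --- only that the real points of $D$ form a proper real subvariety), and the paper also uses semi-algebraicity plus the definition of $C^\#(\V)$ to find an open set on which $C^\#_\V$ attains its essential supremum. The genuine divergence is in your treatment of \eqref{lem:condition2}, and it is worth highlighting. The paper stratifies $\V^*$ into the semi-algebraic manifolds $M_i=\{x\in\V^*:\dim\mathcal{T}_{\V}(x)=i\}$, proves that each $M_i$ is closed in $\bigcup_j M_j$, deduces from the density hypothesis that $\V^{\rm reg}\cap M_i$ is dense in $M_i$ for every $i$, and then carries out the dimension count stratum by stratum: $\dim(M_i\setminus\V^{\rm reg})<i$ and the normal fibers over $M_i$ have dimension $n-i$, so each bad normal bundle has dimension $<n$. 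You instead prove the stronger structural fact that, under the density hypothesis, $\V^*$ has constant local dimension $d=\dim\V$: a non-regular smooth point $x$ is a limit of regular points $z_i$, and once $z_i$ lands in the chart where $\V\cap\Omega$ is a $k$-manifold, the local dimension at $z_i$ must equal both $k$ and $d$. This forces $k=d$, so in the paper's notation $M_i=\emptyset$ for $i\neq d$ and the stratification collapses; a single count $\dim N_{\mathrm{bad}}\le(d-1)+(n-d)=n-1$ then suffices. Your version is a genuine simplification: it isolates precisely what the density hypothesis buys you, and also makes the role of the Cartan umbrella (where a non-regular smooth stratum of lower local dimension carries a full normal plane) crisper than the paper does. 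The paper's stratified argument is the more conservative one --- it would continue to work even if one could not establish constant local dimension outright --- but it is proving the same thing.

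One small technical point, present in both your write-up and (implicitly) the paper's: the complement of the semi-algebraic image of $N_{\mathrm{bad}}$, which has empty interior, need not itself be open. Since that image is semi-algebraic of dimension $\le n-1$, its Euclidean closure also has dimension $\le n-1$; one should take $U_3$ to be the complement of this \emph{closure}, which is open and of full measure. With that adjustment, $U_1\cap U_2\cap U_3$ is a nonempty open set, as the lemma requires.
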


\begin{proof}
Fix an irreducible component $\mathcal{W}$ of $\V$. Then from \cite[Theorem 4.1]{DHOST} the set of all $y \in \CC^n$ for which 
$C^{\rm \# \, reg}_{\mathcal{W}_\CC}(y)\neq {\rm EDdegree}(\mathcal{W})$  lies in a proper subvariety 
$D$ in  $\CC^n$. 
Consider the real variety $D_\RR:= D\cap \RR^n$ which contains all the real points $y$ with 
$C^{\rm \# \, reg}_{\mathcal{W}_\CC}(y)\neq {\rm EDdegree}(\mathcal{W})$.
The Zariski closure of $D_\RR$ is a subvariety of $D$ with complex dimension equal to  the real dimension of $D_\RR$.
Therefore, the real dimension of $D_\RR$ is smaller than $n$, and hence $D_\RR$ is a proper subvariety in 
$\RR^n$. In particular, the complement of $D_\RR$ in $\RR^n$ is an open full measure set. Taking the intersection of all such 
open full measure sets as we vary over the irreducible components of $\V$, we still have an open full measure set, and any point 
$y$ in this set satisfies \eqref{lem:condition3}.

Since $\VV$ is a variety, one has $C^\#(\VV)<\infty$ by Proposition \ref{prop:semialgebraic}. 
Then by quantifier elimination, the set 
$\{y\in \RR^n \ : \   C^\#_{\VV}(y) = C^\#(\V)\}$ is semialgebraic. Moreover, this set cannot be Lebesgue null by the definition of $C^\#(\V)$. Therefore, it must contain an open subset. 
Hence \eqref{lem:condition1} 
holds for all $y$ in some open set.

We now show that \eqref{lem:condition2} holds for any point $y$ in an open full measure set, which will prove the lemma.
For any $i= 0, 1,\ldots, n$ consider the submanifold
$$
M_i := \{x\in \VV^* \ : \  {\rm dim}(\mathcal{T}_{\V}(x))=i \}.
$$
These submanifolds $M_i$ clearly partition $\VV^*$. We first claim $M_i\cap {\rm cl}(M_j)=\emptyset$ for all distinct pairs $i,j \in [n]$. 
Note the symbol ``cl" here denotes the Euclidean closure.
Indeed, if there existed some point $x\in M_i \cap {\rm cl}(M_j)$, then we would deduce for all points $y\in M_j$ sufficiently near $x$, the equality $j=\dim \mathcal{T}_{\mathcal{V}}(y)= \dim \mathcal{T}_{\mathcal{V}}(x) =  i$, a contradiction. The middle equality follows from the fact that $\dim \mathcal{T}_\V( \cdot )$ is constant in a neighborhood of $x$ in $\V$.

Now since $\VV^{\rm reg}$ is dense in $\VV^*$, the inclusion $\V^* \subseteq {\rm cl}(\V^{\rm reg}) =  \bigcup_i {\rm cl}(\V^{\rm reg} \cap M_i)$ holds.
We claim that for any $i$, the set $\VV^{\rm reg} \cap M_i$ is Euclidean dense in $M_i$, 
namely, $M_i \subseteq {\rm cl}(\VV^{\rm reg} \cap M_i)$. 
To see this, it suffices to establish $M_i \cap {\rm cl}(\VV^{\rm reg}\cap M_j)=\emptyset$ for any $j\neq i$. This follows immediately by observing
$M_i \cap {\rm cl}(\VV^{\rm reg}\cap M_j)\subseteq M_i \cap {\rm cl}(M_j) =\emptyset$. 
It follows that for any $i$, the strict inequality 
$$
\dim (M_i \setminus (\VV^{\rm reg}\cap M_i)) < \dim M_i
$$
holds (see \cite[Proposition 3.16]{costeIntrotoSAG}).
By dimension arguments, the dimension of the set 
$$
\bigcup_{x\in M_i \setminus (M_i \cap \VV^{\rm reg})} (x+\N_\VV(x)) 
$$
is strictly less than $n$. In particular, its interior is empty. 
Taking the union over $i$, 
$$
\bigcup_{x\in \VV^* \setminus \VV^{\rm reg}}(x+\N_\VV(x)) 
$$
also has empty interior. 
Any $y$ which is not in this set satisfies \eqref{lem:condition2}.
\end{proof}

We conclude the paper by comparing $C^\#(\V)$ and ${\rm EDdegree}(\V)$ in some of the examples 
we saw in Section~\ref{sect:real critical points of spectral sets} and Section~\ref{sect:applications}.

\begin{example} [$SL_n^{\pm}$] \label{ex:lb sln}
{\rm
Every point in the  Zariski closure of $SL_n^\pm$ (c.f. Example \ref{ex:special linear})
is regular. Therefore by Theorem \ref{thm:cV at most EDdegree}, we obtain
$$
C^\#(SL_n^\pm) \leq {\rm EDdegree}(SL_n^\pm) = n2^n.
$$
The formula for EDdegree was derived in \cite{draisma}.
We saw in Example \ref{ex:special linear} that  strict inequality holds when $n=2$.
}
\end{example}

\begin{example} [Unit sphere of Schatten $d$-norm] \label{ex:lb fermat}
{\rm
Consider the set $\mathcal{F}_{2,2,d} \subseteq \RR^{2\times 2}$ given in Example \ref{ex:unit ball}.
For any $d$  the Zariski closure of   $\mathcal{F}_{2,2,d} $ is regular everywhere.
By Theorem \ref{thm:cV at most EDdegree}, 
$$
 {\rm EDdegree}(\mathcal{F}_{2,2,d}) \geq C^\#(\mathcal{F}_{2,2,d}) = 8,
$$
for $d\geq 4$.
From {\tt Macaulay2} \cite{M2} computations,  ${\rm EDdegree}(\mathcal{F}_{2,2,d})$ equals $16,34,64,98$ when $d=4,6,8,10$ respectively. This suggests the gap between $C^\#(\mathcal{F}_{2,2,d} )$ and  ${\rm EDdegree}(\mathcal{F}_{2,2,d})$
increases with $d$.
}
\end{example}


\begin{example}
{\rm 
Let $\mathcal{E}$ be the  essential variety from \eqref{eqn:essential}. 
It is an irreducible variety of codimension three in $\RR^{3\times 3}$
\cite[Proposition 3.6]{demazure1988}. 
By the transfer of smoothness (see the paragraph after the proof of Proposition~\ref{prop:proj transfer}), 
we know $\mathcal{E}^\ast = \mathcal{E}\setminus\{0\}$. 
Moreover, a straightforward computation verifies that any non-zero essential matrix is a regular point of $\E$.
Thus $\mathcal{E}^{\rm reg} = \mathcal{E}^\ast = \mathcal{E}\setminus\{0\}$, and by Theorem~ \ref{thm:cV at most EDdegree}, 
we have 
\begin{equation} \label{eq:eddegree e variety}
{\rm EDdegree}(\mathcal{E}) \geq  C^\#(\mathcal{E})=6.
\end{equation}
With completely different tools, but entirely motivated by this work, one can prove the equality
${\rm EDdegree}(\mathcal{E})=6$. The details will be described in a forthcoming paper.
}
\end{example}



\appendix\section{}

The Cartan umbrella illustrates a yet unexplored feature of the 
EDdegree concerning the exceptional loci of points $y$ at which $C^{\rm \# \, reg}_{\V_\CC}(y)$ can be different from ${\rm EDdegree}(\V)$.
We comment briefly on this topic which deserves further investigation.

For each $y \in \CC^n$,  the set $C^{\rm  reg}_{\V_\CC}(y)$  is the variety of a polynomial ideal called the {\em critical ideal} of $y$ (see (2.1) in \cite{DHOST}), and typically, this variety has $\textrm{EDdegree}(\V)$-many distinct complex solutions. 
There are two ways in which the critical ideal of $y$ can have fewer distinct roots; the first is because of roots with multiplicity, and the second is because a root may wander off into ${\rm Sing}(\V_\CC)$ due to closure issues. These situations create two exceptional loci in the space of data points $y$. The locus of $y \in \CC^n$ for which the critical ideal has roots with multiplicity  is called the {\em ED discriminant} of $\V_\CC$, and is denoted as $\Sigma_{\V_\CC}$.  The ED discriminant is typically a hypersurface in $\CC^n$ and can be computed from the equations of $\V$ (see Section 7 in \cite{DHOST} for algorithms and examples). We saw the ED discriminant of the parabola $\{(x_1,x_2) \in \RR^2 \,:\, x_2=x_1^2 \}$ in Figure~\ref{fig:parabola}. 
It is the curve defined by 
$$16y_2^3-27y_1^2-24y_2^2+12y_2-2 = 0.$$

The second type of exceptional locus has not been studied in \cite{DHOST} and was suggested to us by Bernd Sturmfels. To describe it, let us denote the copy of $\CC^n$ that contains $\V_\CC$ as $\CC^n_x$ and the copy that contains the data points $y$ as $\CC^n_y$. The {\em ED correspondence} of $\V_\CC$, denoted as $\mathbb{E}_{\V_\CC}$ and described in \cite[Section 4]{DHOST}, is the Zariski closure of:
\begin{align}
	\left\{ (x,y) \in \CC^n_x \times \CC^n_y \,:\, x \in \V_\CC^{\rm reg}, \,\,y \in \CC^n, \,\,x-y \in \N_{\V_\CC}^{\rm alg} (x) \right\}.
\end{align}
By \cite[Theorem~4.1]{DHOST}, the ED correspondence is an irreducible variety of dimension $n$ in $\CC^n_x \times \CC^n_y$. It admits two natural projections $\pi_x$ and $\pi_y$ into $\CC^n_x$ and $\CC^n_y$ respectively. Over general $y \in \CC^n$, the projection $\pi_y \,:\, \mathbb{E}_{\V_\CC} \rightarrow \CC^n_y$ has finite fibers of cardinality equal to ${\rm EDdegree}(\V)$. 
The exceptional locus we are looking for is the set of data points $y \in \CC^n$ that have critical points that fall into the 
singular locus ${\rm Sing}(\V_\CC)$. 
This is precisely the Zariski closure of the set 
\begin{align} \pi_y \left( \mathbb{E}_{\V_\CC} \cap ({\rm Sing}(\V_\CC) \times \CC^n_y)    \right)
\end{align}
which we call the {\em ED data singular locus} of $\V_\CC$, and denote by $D_{\V_\CC}$.  
This affine scheme deserves further study, and we illustrate 
both $D_{\V_\CC}$ and $\Sigma_{\V_\CC}$ for the Cartan umbrella in Example~\ref{ex:cartan_contd}. Note that $D_{\V_\CC}$ is empty for the parabola in Figure~\ref{fig:parabola} since its singular locus is empty.
The upshot of the above discussion is that for all $y \in \RR^n \setminus D_{\V_\CC}$ but in a connected component of $\RR^n_y \setminus \Sigma_{\V_\CC}$, the number of regular critical points on $\V$, $C^{\rm \# \, reg}_{\V}(y)$,  is a constant. 

\begin{example} [Cartan umbrella continued] \label{ex:cartan_contd}
	{\rm 
		The ED discriminant of the Cartan umbrella in Example~\ref{ex:cartan} is a degree 12 surface in the space of 
		data points given by the following equation:\\
		\begin{small}
			$$
			\begin{array}{c}
			256y_1^{12}-35328y_1^{10}y_2^2-108984y_1^8y_2^4-111867y_1^6y_2^6-93975y_1^4y_2^8-9216y_1^2y_2^{10}\\
			-2048y_2^{12}-2304y_1^{11}y_3-2112y_1^9y_2^2y_3-149280y_1^7y_2^4y_3-116868y_1^5y_2^6y_3\\
			+53532y_1^3y_2^8y_3+34560y_1y_2^{10}y_3+6912y_1^{10}y_3^2+14016y_1^8y_2^2y_3^2-28764y_1^6y_2^4y_3^2\\
			+41502y_1^4y_2^6y_3^2-86430y_1^2y_2^8y_3^2-768y_2^{10}y_3^2-7936y_1^9y_3^3+150720y_1^7y_2^2y_3^3\\
			-200148y_1^5y_2^4y_3^3-411728y_1^3y_2^6y_3^3+1476y_1y_2^8y_3^3+9216y_1^8y_3^4-46656y_1^6y_2^2y_3^4\\
			+31908y_1^4y_2^4y_3^4+110817y_1^2y_2^6y_3^4+4953y_2^8y_3^4-27648y_1^7y_3^5+23808y_1^5y_2^2y_3^5\\
			+91236y_1^3y_2^4y_3^5-40284y_1y_2^6y_3^5+28672y_1^6y_3^6-196992y_1^4y_2^2y_3^6-240480y_1^2y_2^4y_3^6\\
			-2592y_2^6y_3^6-9216y_1^5y_3^7+14208y_1^3y_2^2y_3^7+28800y_1y_2^4y_3^7+27648y_1^4y_3^8\\
			+39168y_1^2y_2^2y_3^8+2304y_2^4y_3^8-27648y_1^3y_3^9-27648y_1y_2^2y_3^9=0.
			\end{array}
			$$
		\end{small}
		
		The EDdegree of the Cartan umbrella is seven. The ED discriminant partitions $\RR^3$ into two regions where the real regular 
		critical points are either one or three, while the number of ED critical points is two or four. In particular, for all $y \in \RR^3$ 
		for which the polynomial defining the ED discriminant is positive, we get three real regular critical points.
		
		\begin{figure}
			\includegraphics[scale=0.4]{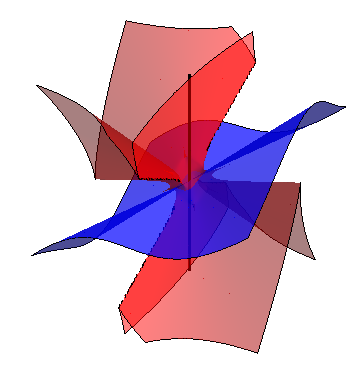}
			\includegraphics[scale=0.4]{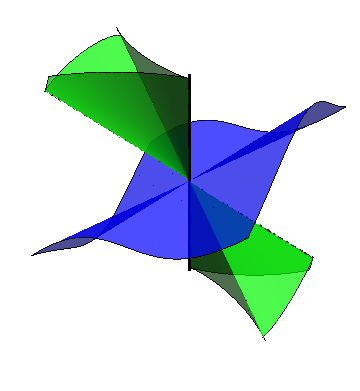}
			\caption{The Cartan umbrella with its ED discriminant on the left and ED data singular locus on the right.}
			\label{fig:cartanumbrellaWloci}
		\end{figure}

		The ED data singular locus, $D_{\V_\CC}$,  is the reducible surface in $\CC^3$ defined by
		$$
		(y_1^2+y_2^2) \cdot (4y_1^4+8y_1^2y_2^2+4y_2^4+4y_1^3y_3+36y_1y_2^2y_3+27y_2^2y_3^2) = 0.
		$$
		(see Figure  \ref{fig:cartanumbrellaWloci})
		The data point $y=(-2,-1,2)$ lies on $D_{\V_\CC}$ but not on $\Sigma_{\V_\CC}$. The critical ideal of this point 
		has seven distinct complex roots but one of them is singular. Among the remaining six roots, two are real and, 
		$(-2,-1,2)$ lies in the region where the polynomial defining $\Sigma_{\V_\CC}$ is positive. 
		
	}
\end{example}

{\bf Acknowledgments.} We thank Jan Draisma, Giorgio Ottaviani, and Bernd Sturmfels for helpful discussions. 
In particular, the notion of ED data singular locus was suggested to us 
by Sturmfels.

\bibliographystyle{plain}
\bibliography{bibliography}


\end{document}